\newtheorem{theorem}{Theorem}[section]
\newtheorem{lemma}[theorem]{Lemma}
\newtheorem{prop}[theorem]{Proposition}
\newtheorem{cor}[theorem]{Corollary}
\newtheorem{defn}[theorem]{Definition}
\theoremstyle{definition}
\theoremstyle{remark}
\newtheorem{remark}[theorem]{Remark}
\numberwithin{equation}{section}
\def\Xint#1{\mathchoice
   {\XXint\displaystyle\textstyle{#1}}%
   {\XXint\textstyle\scriptstyle{#1}}%
   {\XXint\scriptstyle\scriptscriptstyle{#1}}%
   {\XXint\scriptscriptstyle\scriptscriptstyle{#1}}%
   \!\int}
\def\XXint#1#2#3{{\setbox0=\hbox{$#1{#2#3}{\int}$}
     \vcenter{\hbox{$#2#3$}}\kern-.5\wd0}}
\def\avgint{\Xint-}
\DeclareMathOperator{\Div}{div}
\DeclareMathOperator{\supp}{supp}
\DeclareMathOperator{\sgn}{sgn}
\DeclareMathOperator{\grad}{\nabla}
\newcommand{\op}{{\mathrm{op}}}
\DeclareMathOperator{\lip}{\mathrm{Lip}}
\newcommand{\N}{\mathbb N}
\newcommand{\R}{\mathbb{R}}
\newcommand{\rn}{{\mathbb{R}^n}}
\newcommand{\loc}{\mathrm{loc}}
\newcommand{\vecg}{\mathbf g}
\newcommand{\vect}{\mathbf t}
\title[Bounded solutions and Orlicz gain]
{Bounded solutions of degenerate elliptic equations\\ with an Orlicz-gain Sobolev inequality }
\author{David Cruz-Uribe OFS, Sullivan MacDonald, and Scott Rodney}
\address{David Cruz-Uribe, OFS \\
Dept. of Mathematics \\
University of Alabama \\
 Tuscaloosa, AL 35487, USA}
\email{dcruzuribe@ua.edu}
\address{ Sullivan F.~MacDonald\\
Dept. of Mathematics \\
University of Toronto \\
Toronto, Ontario \\
Canada, M5S 2E4}
\email{sullivan.macdonald@mail.utoronto.ca}
\address{Scott Rodney\\
Dept. of Mathematics, Physics and Geology \\ 
Cape Breton University \\
Sydney, NS B1Y3V3, CA} 
\email{scott\_rodney@cbu.ca}
\thanks{The first author is partially supported by a Simons Foundation
  Travel Support for Mathematicians Grant and by NSF Grant DMS-2349550. At the time of this project, the second author was supported by a Masters' scholarship from McMaster University.  The third author  is partially supported by an NSERC development grant.  The first and third authors would also like to thank TUBITAK, the Scientific and Technological Research Council of
    T\"urkiye, for their support through a 2501 Joint Research Program grant, Yusuf Zeren, Y\i ld\i z Technical University, principal investigator.
    Finally, the authors would like to thank the referees for their thorough and detailed reports which led to a number of improvements in the paper.}
\keywords{degenerate elliptic equations, Sobolev inequalities, $p$-Laplacian}
\subjclass{35B65, 35J62, 35J70, 42B37, 46E30}
\begin{document}
\begin{abstract}
We consider the boundedness and exponential integrability of solutions to the Dirichlet problem for the degenerate quasilinear elliptic equation 
\[ -v^{-1}\Div(|\sqrt{Q}\grad u|^{p-2}Q\grad u)=f|f|^{p-2} -v^{-1}\Div(v|g|^{p-2}g{\bf t}),  \quad 1<p<\infty, \]
assuming that there is a Sobolev inequality of the form 
\[ \|\varphi\|_{L^N(v,\Omega)}\leq S_N\|\sqrt{Q} \nabla\varphi\|_{L^p(\Omega)}, \]
where $N$ is a power function of the form $N(t)=t^{\sigma p}$, $\sigma\geq 1$, or a Young function of the form $N(t)=t^p\log(e+t)^\sigma$, $\sigma>1$.  In our results we study the interplay between the Sobolev inequality and the regularity assumptions needed on $f$ and $g$ to prove that the solution is bounded or is exponentially  integrable.  Our results generalize those previously proved in~\cite{MR4280269,10.48550/arxiv.2210.12441}.
\end{abstract}

\maketitle

 \section{Introduction and main results}

In this paper we explore the regularity of solutions to a family of degenerate quasilinear elliptic equations, assuming the existence of a Sobolev inequality adapted to the geometry of the equation.  Fix a bounded domain $\Omega$ and  $1<p<\infty$.  Let $Q$ be an $n\times n$ positive semi-definite, measurable  matrix function, and  let $v$ be a weight.    We are interested in the behavior of solutions of the Dirichlet problem for the Poisson equation for the degenerate $p$-Laplacian:
\begin{align}\label{dirichlet-problem} 
\left\{\begin{array}{rcl}
    -v^{-1}\Div(|\sqrt{Q}\grad u|^{p-2} Q\grad u) &=& f|f|^{p-2}-v^{-1}\Div(v|g|^{p-2}g{\bf t}) \textrm{ in } \Omega,\\
    u &=& 0 \textrm{ in } \partial \Omega,
\end{array}\right.
\end{align}
where $f,g$ are real-valued measurable functions on $\Omega$, and ${\bf t}$ is $\mathbb{R}^n$-valued and defines a degenerate subunit vector field on $\Omega$.  (See Section~\ref{section:prelim} below for precise definitions.)  Of particular interest is the case when $Q$ is not uniformly elliptic: that is, it has upper and lower eigenvalues that are not necessarily bounded or bounded away from $0$ in $\Omega$. In particular, we will consider the case where for all $\xi \in \R^n$ and a.e.~$x\in \Omega$,
\[  0 \leq |\sqrt{Q(x)}\xi|^p \leq kv(x)|\xi|^p.  \]

Starting with the seminal paper by Fabes, Kenig and Serapioni~\cite{MR643158}, this equation has been studied by a number of authors, both in the linear case when $p=2$ and the quasilinear case when $1<p<\infty$.    See, for instance,~\cite{CW,Korobenko:2016ue,MR3011287,MR2771262,MR0839035,MR0847996,MR805809,MR1354890,MR1207810,MR2944065,MR2341517,MR2435212,deGiorgi-preprint,korobenko2021,korobenko2024}.  In these works the authors generally considered a particular geometric setting and showed that a Sobolev inequality configured to that specific geometry was satisfied, and then proved their results.  

More recently, beginning with the work of Sawyer and Wheeden~\cite{SW2,MR2204824}, a more abstract approach has developed, wherein it is assumed that a degenerate Sobolev inequality of the form
\begin{equation} \label{eqn:intro-sobolev}
 \|\varphi\|_{L^{\sigma p}(v,\Omega)} \leq C\|\sqrt{Q}\grad \varphi\|_{L^p(\Omega)} 
 \end{equation}
holds for some $\sigma\geq 1$ and all $\varphi\in \lip_0(\Omega)$.  This is then used to deduce properties of solutions of the Dirichlet problem.  This approach has been continued by the authors and their collaborators:  see~\cite{MR4280269,CRR1,CRR2,MRW,MR,R,10.48550/arxiv.2210.12441, macdonald2023,MR4069608,MR3388872,MR3095112}.  There are drawbacks to this approach, the most obvious being that it requires proving the existence of a Sobolev inequality in a particular geometric setting before the results can be applied.  Such inequalities have been proved in a variety of settings: see, for instance,~\cite{MR643158,MR805809,DCU-FED-SR,Korobenko:2016ue,korobenko2024} and the forthcoming paper~\cite{DCU-FED-SR-2}. Also, see Section~\ref{section:geometric} below.   On the other hand, this approach has the advantage that it makes clear the precise role played by the Sobolev inequality in the theory.  We  further explore this role  in a recent paper on the existence of solutions to linear degenerate elliptic equations~\cite{tubitak1}.

In this paper we consider three kinds of Sobolev inequalities:  first, a Sobolev inequality with a power gain (that is, as in inequality~\eqref{eqn:intro-sobolev}).  This case has been examined previously in~\cite{R,10.48550/arxiv.2210.12441} when $g=0$, and we include it partly for completeness.  Second, we consider a Sobolev inequality where the gain is in an Orlicz space that lies between $L^p$ and $L^{q}$ for any $q>p$--that is, 
\begin{align}\label{sobolev} 
\|\varphi \|_{L^N(v,\Omega)} \leq S_N \|\grad \varphi\|_{QL^p(\Omega)},
\end{align}
where $N(t)=t^p\log(e+t)^\sigma$, $\sigma>0$.  There is recent interest in such Sobolev inequalities for their applications to infinitely degenerate elliptic equations~\cite{korobenko2024,Korobenko:2016ue,korobenko2021,MR4331593}, since the underlying geometry in this case is nondoubling, and it has been shown that the existence of a Sobolev inequality with power gain~\eqref{eqn:intro-sobolev} is in some sense equivalent to the underlying geometry being doubling. See~\cite{MR3359590,MR4277804}.   Finally, we consider what can be accomplished assuming a Sobolev inequality without gain, i.e.,  where $N(t)=t^p$, which corresponds to a very rough underlying geometry.  To the best of our knowledge, little or no work has been done with this assumption.  It is of interest since it shows what can be found in common among all spaces that support a  Sobolev inequality with some kind of gain.  

To provide some context and motivation for our main theorems, we first recall a classical result.    If $u\in W^{1,2}_0(\Omega)$ is a solution to the equation $-\Div(Q\grad u)=f-\Div(\vecg)$, where $Q$ is uniformly elliptic, then $u$ is bounded if $f\in L^q(\Omega)$ and $\vecg \in L^{2q}(\Omega)$, whenever $q>\frac{n}{2}$.  See~\cite[Theorem~8.16]{GT}, which is attributed to Trudinger~\cite{MR369884}.    Versions of this result (e.g.,~\cite[Theorem~8.15]{GT}) were proved by Ladyzehnskaya and Ural$'$tseva~\cite{MR244627}, and Stampacchia~\cite{MR192177,MR251373}.  Related results which assume stronger boundary conditions (e.g., $u$ is continuous up to the boundary, see~\cite[Theorem~9.1]{GT}) were proved by Alexsandrov and Bakelman. (See the notes to~\cite[Chapter~9]{GT} for details and further references.)
A key observation is that in this classical result, the critical index is $\frac{n}{2}=(\frac{n}{n-2})'$, that is, it is the dual of the gain in the classical Sobolev inequality.  In our setting of degenerate elliptic equations, we consider the interplay between the gain in the Sobolev inequality in different scales and the assumptions on $f$ and $\vecg$ to ensure that weak solutions are bounded.  We first consider gain in the scale of Lebesgue spaces, that is, a power gain.

\begin{theorem} \label{thm:power-gain}
Given a bounded domain $\Omega\subset \mathbb{R}^n$ and $1<p<\infty$, suppose $Q$ and $v$ are such that 
$v\in L^1(\Omega)$, $|Q|_\op^{\frac{p}{2}} \leq kv$, and $\Omega$ supports the Sobolev inequality \eqref{sobolev} with $N(t)=t^{\sigma p}$,  $\sigma>1$. For $q> \sigma'(p-1)$, let $f\in L^A(v,\Omega)$ and $g\in L^B(v,\Omega)$, where $A(t)=t^{\sigma'(p-1)}\log(e+t)^q$ and $B(t)=t^{p\sigma'}\log(e+t)^{qp'}$.    Then, given any degenerate  weak subsolution (supersolution) $u\in QH_0^{1,p}(v,\Omega)$ of the Dirichlet problem~\eqref{dirichlet-problem}, 
 \begin{align}\label{infinity-bound}
 \|u^+~(u^-)\|_{L^\infty(v,\Omega)} \leq C(A,B,N,\vect,p, S_N,\Omega,v)\left(\|f\|_{L^A(v,\Omega)}+\|g\|_{L^B(v,\Omega)}\right),
 \end{align}
 where the constant is independent of $u$, $f$, and $g$. In particular, if $u$ is a degenerate weak solution,  $u\in L^\infty(v,\Omega)$ with this bound.
\end{theorem}

\begin{remark}
    Theorem~\ref{thm:power-gain} was proved when $p=2$ and $\vecg=0$ in~\cite{MR4280269}.  As was shown there, in the classical case when $v=1$ and $Q=I$, the identity matrix, there are examples to show that the theorem is false if $q<\sigma'-1 =\frac{n}{2}-1$ ($\frac{n}{2}$  is the dual of the gain in the classical Sobolev inequality), but it is unknown if the theorem is true for $\sigma'-1<q<\sigma'$.  

    In the case $1<p<\infty$, a more general version of Theorem~\ref{thm:no-gain}, but again with $g=0$,  was proved using a somewhat different approach in~\cite{10.48550/arxiv.2210.12441}.
\end{remark}

\medskip

Next, we consider the gain in the scale of Orlicz spaces.  In the case of power gain, as $\sigma$ decreases to $1$, we see that we need $f$ to be in an increasingly smaller Lebesgue space, so that it has more regularity.  For gain in Orlicz spaces, we see that $f$ can still be unbounded, but must be exponentially integrable.

\begin{theorem} \label{thm:orlicz-gain}
Given a bounded domain $\Omega\subset \mathbb{R}^n$ and $1<p<\infty$, suppose $Q$ and $v$ are such that 
$v\in L^1(\Omega)$, $|Q|_\op^{\frac{p}{2}} \leq kv$, and $\Omega$ supports the Sobolev inequality \eqref{sobolev} with $N(t)=t^p\log(e+t)^{\sigma}$,  $\sigma>p$. For $\alpha>\frac{p(p-1)}{\sigma}$ and $\beta >\frac{p}{\sigma-p}$, let $f\in L^A(v,\Omega)$, where $A(t)=\exp(t^\alpha)-1$, and $g\in L^B(v,\Omega)$ with $B(t) = \exp(t^\beta)-1$.  
Then, given any degenerate weak subsolution (supersolution) $u\in QH_0^{1,p}(v,\Omega)$ of the Dirichlet problem \eqref{dirichlet-problem}, 
 \begin{align}\label{orlicz-infinity-bound}
 \|u^+~(u^-)\|_{L^\infty(v,\Omega)} \leq C(A,B,N,\vect,p, S_N,\Omega,v)\left(\|f\|_{L^A(v,\Omega)}+\|g\|_{L^B(v,\Omega)}\right),
 \end{align}
 where the constant is independent of  $u$, $f$, and $g$.  In particular, if $u$ is a degenerate weak solution,  $u\in L^\infty(v,\Omega)$ with this bound.
\end{theorem}

\begin{remark}
    While we have chosen to prove Theorem~\ref{thm:orlicz-gain} for Orlicz functions of the form $N(t)=t^p\log(e+t)^{\sigma}$, the argument can be adapted to work with other Young functions, for example, the functions $N(t)\approx \exp\big( [ \log(t^2)^{\frac{1}{m}}+1]^m\big)$, $m>1$,  which are  related to the Sobolev inequalities recently considered in~\cite{korobenko2024}.  See Section~\ref{section:geometric} below for details.
\end{remark}

\smallskip

If $f=0$, the condition on $g$ in Theorem~\ref{thm:orlicz-gain} is the best possible yielded by our proof.  However, the condition on $f$ can be improved, but requires linking the values of $\alpha$ and $\beta$, which seems unnatural.  We will discuss this further after the proof of Theorem~\ref{thm:orlicz-gain}.   By setting $g=0$ we can modify the proof  to get the following result, which has weaker conditions on both $\sigma$ and $\alpha$.  We conjecture that Theorem~\ref{thm:orlicz-gain} can be improved to give this bound on $\alpha$, but we have been unable to prove it.

\begin{theorem}\label{thm:orlicz-gain-g=0}
Given a bounded domain $\Omega\subset \mathbb{R}^n$ and $1<p<\infty$, suppose $Q$ and $v$ are such that 
$v\in L^1(\Omega)$, $|Q|_\op^{\frac{p}{2}} \leq kv$, and $\Omega$ supports the Sobolev inequality \eqref{sobolev} with $N(t)=t^p\log(e+t)^{\sigma}$,  $\sigma>p-1$. 
Given $\alpha>\frac{p-1}{\sigma-p+1}$, let $f\in L^A(v,\Omega)$, where $A(t)=\exp(t^\alpha)-1$.    Then, given any  degenerate weak subsolution (supersolution) $u\in QH_0^{1,p}(v,\Omega)$ of the Dirichlet problem \eqref{dirichlet-problem} with $g=0$, 
 \begin{align}\label{orlicz-infinity-bound}
 \|u^+~(u^-)\|_{L^\infty(v,\Omega)} \leq C(A,B,N,\vect,p, S_N,\Omega,v)\left(\|f\|_{L^A(v,\Omega)}\right),
 \end{align}
 where the constant is independent of  $u$ and $f$.  In particular, if $u$ is a degenerate weak solution,  $u\in L^\infty(v,\Omega)$ with this bound.
\end{theorem}

\medskip

We now consider the case of a Sobolev inequality with no gain.  In this case,
while solutions exist (see, for instance,~\cite{korobenko2021,tubitak1}), we can no longer prove that they are bounded, even if we assume $f,g\in L^\infty(v,\Omega)$.  We conjecture that a counterexample exists and that boundedness fails in general in this setting. However, while solutions may not be bounded, we are able to prove that weak solutions are exponentially integrable.

\begin{theorem} \label{thm:no-gain}
Given a bounded domain $\Omega\subset \mathbb{R}^n$ and $1<p<\infty$, suppose $Q$ and $v$ are such that 
$v\in L^1(\Omega)$, $|Q|_\op^{\frac{p}{2}} \leq kv$, and $\Omega$ supports the Sobolev inequality \eqref{sobolev} with no gain, that is, with $N(t)=t^p$.  Let $f,g\in L^\infty(v,\Omega)$. Then, given any  degenerate weak subsolution (supersolution) $u\in QH_0^{1,p}(v,\Omega)$ of the Dirichlet problem \eqref{dirichlet-problem},
\[ \|u^+~(u^-) \|_{L^{H}(v,\Omega)} \leq C(H,\vect,p, S_N, \Omega, v)\left( \|f\|_{L^\infty(v,\Omega)}+\|g\|_{L^\infty(v,\Omega)}\right), \]
where $H(t) = t^r\exp(t)$ for any $r>0$, and  the constant is independent of $u$, $f$, and $g$.  In particular, if $u$ is a degenerate weak solution,  $u\in L^H(v,\Omega)$ with this bound.
\end{theorem}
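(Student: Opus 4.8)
The plan is to run a Moser-type iteration argument, but starting from a Sobolev inequality with no gain, so the iteration does not close in the usual way; instead it yields exponential integrability. First I would fix a non-negative weak subsolution $u$ (after the usual reduction to $u\geq 0$ by working with $u^+$, which is also a subsolution) and test the equation against powers of $u$. Concretely, for $\beta\geq 1$ I would use the test function $\varphi = u^{\beta}$ (suitably truncated so that it lies in $QH_0^{1,p}(v,\Omega)$ and then pass to the limit), which produces on the left-hand side a term comparable to
\[
\frac{\beta}{(\beta-1+p)^{p}}\,\bigl\| \sqrt{Q}\,\nabla\bigl(u^{(\beta-1+p)/p}\bigr)\bigr\|_{L^p(\Omega)}^p,
\]
and on the right-hand side the term $\int_\Omega |f|^{p-1} u^{\beta}\,v\,dx$. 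Applying the no-gain Sobolev inequality \eqref{sobolev} with $N(t)=t^p$ to $\varphi = u^{(\beta-1+p)/p}$, and writing $\gamma = (\beta-1+p)/p$, I would obtain an inequality of the shape
\[
\|u\|_{L^{p\gamma}(v,\Omega)}^{p\gamma}
\leq C\,\frac{(\beta-1+p)^p}{\beta}\, S_N^p\int_\Omega |f|^{p-1} u^{\beta} v\,dx
\leq C\,\gamma^{p-1}\, S_N^p\,\|f\|_{L^\infty(v,\Omega)}^{p-1}\,\|u\|_{L^{\beta}(v,\Omega)}^{\beta}\,.
\]

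Next I would set up the recursion. Since there is no gain, $p\gamma$ and $\beta$ differ only by the additive constant $p-1$: writing $\beta_k = 1 + k(p-1)$ (so $p\gamma_k = \beta_{k+1}$), the above becomes a recursion relating $\|u\|_{L^{\beta_{k+1}}(v)}^{\beta_{k+1}}$ to $C\gamma_k^{p-1}\|f\|_\infty^{p-1}\|u\|_{L^{\beta_k}(v)}^{\beta_k}$. Iterating this from $k=0$ (where $\|u\|_{L^{\beta_0}(v)} = \|u\|_{L^1(v)}$, finite since $v\in L^1$ and, via the Sobolev inequality applied once to $u$ itself, $\|u\|_{L^p(v)}<\infty$), I would track the product of the constants: the key point is that $\gamma_k^{p-1} \sim k^{p-1}$, and the exponents $\beta_k \sim k(p-1)$, so after taking the $\beta_k$-th root and multiplying, the accumulated constant grows like $C^{k}\,k!^{(p-1)/\beta_k}\,\|f\|_\infty^{(p-1)\cdot(\text{something summable in }1/\beta_k)}$. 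One then estimates $\|u\|_{L^{\beta_k}(v)}$ and uses the elementary fact that a function with $\|u\|_{L^{m}(v)} \leq A\, C^{m}\,\Gamma(m/(p-1)+1)^{1/(p-1)}$ for all large $m$ lies in the Orlicz class $L^{B}(v,\Omega)$ with $B(t)=\exp(t)-1$, with norm controlled by $A\cdot C^{\#}$; here $A$ is proportional to $\|f\|_{L^\infty(v,\Omega)}$ up to the initial $\|u\|_{L^1(v)}$ term, which itself must be bounded by $\|f\|_\infty$ via the energy estimate obtained by testing \eqref{dirichlet-problem} against $u$ and using the Sobolev inequality.

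The main obstacle I anticipate is bookkeeping the constants through the infinite iteration so that the final bound is \emph{linear} in $\|f\|_{L^\infty(v,\Omega)}$ rather than something like $\|f\|_\infty^{1+o(1)}$ summed to a worse power; this requires being careful that the powers of $\|f\|_\infty$ appearing at stage $k$ enter with weight $(p-1)/\beta_k \sim 1/k$, whose partial sums diverge, so a naive estimate gives an unbounded power of $\|f\|_\infty$. The fix is the standard Moser trick of normalizing: replace $u$ by $u/\lambda$ where $\lambda = c\,S_N\,\|f\|_{L^\infty(v,\Omega)}$ for an appropriate dimensional constant $c$, run the (now constant-free in $\|f\|$) iteration on $\tilde u = u/\lambda$ to conclude $\|\tilde u\|_{L^B(v,\Omega)}\leq C(p,S_N)$, and undo the scaling. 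One must check that the subsolution inequality is preserved under this scaling — it is, because $\varphi\mapsto \varphi/\lambda$ with $\lambda$ a constant scales both sides of the weak formulation by $\lambda^{p-1}$ up to the replacement $f\mapsto f/\lambda$, which is exactly compatible with the $f|f|^{p-2}$ structure. A secondary technical point is justifying the use of the test functions $u^\beta$: these are not a priori admissible, so I would truncate at height $M$, derive the estimate with constants independent of $M$, and use monotone convergence to remove the truncation, exactly as in the classical Moser iteration; this is routine but should be stated.
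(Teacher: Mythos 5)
Your proposal is essentially correct and reaches the right conclusion, but it takes a genuinely different route from the paper. The paper runs a De~Giorgi-type level-set iteration: test against $(u-r)_+$, use the no-gain Sobolev inequality and H\"older to get the relation $(s-r)\,v(S(s))^{1/p}\leq S_N^{p'}\|f\|_\infty\, v(S(r))^{1/p}$, choose $r_k=\alpha k$ with $\alpha=eS_N^{p'}$ to force $v(S(r_{k+1}))\leq e^{-p}v(S(r_k))$, and then bound the Orlicz modular $\int_\Omega B(u/\lambda)\,v\,dx$ directly via a layer-cake decomposition over the levels $r_k$. You instead run a Moser iteration with the test functions $u^{\beta}$; the key observation in both cases is the same in spirit --- with no gain, the gain in information at each step is \emph{additive} rather than multiplicative (your exponents increase arithmetically, $\beta_{k+1}=\beta_k+(p-1)$; the paper's level thresholds increase arithmetically, $r_{k+1}=r_k+\alpha$) --- and this is exactly what produces exponential, rather than $L^\infty$, integrability. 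Your normalization to $\|f\|_\infty=1$ to linearize the final estimate mirrors the paper's use of the $(p-1)$-homogeneity of the equation, and your truncation of $u^\beta$ to justify the test functions is routine. The paper's approach has the practical advantage that the modular bound is obtained by a two-line summation once the decay $\mu_{k+1}\leq e^{-pk}\mu_1$ is in hand, whereas the Moser route requires careful Stirling-type bookkeeping of the product $\prod_{k}C_k$ to verify that $\|u\|_{L^{\beta_K}(v)}$ grows only like $O(K)\cdot\|f\|_\infty$.

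Two small points to fix in your sketch: (i) the constant in front of the gradient term should be $\beta p^p/(\beta-1+p)^p$, not $\beta/(\beta-1+p)^p$ (this is harmless but worth getting right since you must track constants through infinitely many steps); (ii) the sufficient condition you state for membership in $\exp L$, namely $\|u\|_{L^m(v)}\leq A\,C^m\,\Gamma(m/(p-1)+1)^{1/(p-1)}$, is not the right form --- a $C^m$ factor on the norm is far too much growth. The correct and sufficient statement is that $\|u\|_{L^m(v)}\leq Cm$ for all large $m$ implies $u\in\exp L$ with $\|u\|_{\exp L}\lesssim C$ (expand $\exp(u/\lambda)-1$ in a power series and use $m^m/m!\sim e^m$); the iteration does deliver this linear growth because $\big(\prod_{k\leq K} k^{p-1}\big)^{1/\beta_{K+1}}\sim (K!)^{1/K}\sim K/e$ by Stirling, while the powers of $S_N$ and $\|f\|_\infty$ collapse to fixed powers in the limit.
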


\begin{remark}
    The assumption that $f\in L^\infty(v,\Omega)$ is natural, since in Theorem~\ref{thm:orlicz-gain-g=0}, as $\sigma$ decreases to $p-1$, $\alpha\rightarrow \infty$.
\end{remark}

\begin{remark}
    The exponential integrability of solutions,  in place of boundedness, has been considered previously.  See, for instance~\cite[Theorem~4.1]{MR369884}.
\end{remark}

\begin{remark}
    There are two open questions related to Theorem~\ref{thm:no-gain} which we think are interesting.  First, if we assume further regularity on $f$--e.g., $f$ is continuous--is it possible to show that $u$ is bounded?  Second, what if we assumed a Sobolev inequality with loss, e.g., an inequality of the form $\|\varphi\|_{L^q(v,\Omega)} \leq \|\grad \varphi\|_{QL^p(\Omega)}$, where $1\leq q<p$?  Poincar\'e inequalities with loss (in fact, with $q=1$) have been considered previously.  See, for example, Haj\l asz and Koskela~\cite{MR1336257}.
\end{remark}

Since a Sobolev inequality with any gain implies a Sobolev inequality with no gain (see Lemma~\ref{normcompare} below), an immediate corollary to Theorem~\ref{thm:no-gain} in the endpoint case of Theorem~\ref{thm:orlicz-gain} is that solutions are exponentially integrable.

\begin{cor} \label{cor:endpt-orliczgain}
    Given a bounded domain $\Omega\subset \mathbb{R}^n$ and $1<p<\infty$, suppose $Q$ and $v$ are such that 
$v\in L^1(\Omega)$, $|Q|_\op^{\frac{p}{2}} \leq kv$, and $\Omega$ supports the Sobolev inequality \eqref{sobolev} with $N(t)=t^p\log(e+t)^{p}$.  
 Let $f,g\in L^\infty(v,\Omega)$.    Then, given any degenerate  weak subsolution (supersolution) $u\in QH_0^{1,p}(v,\Omega)$ of the Dirichlet problem \eqref{dirichlet-problem}, 
 \begin{align*}
 \|u^+~(u^-)\|_{L^H(v,\Omega)} \leq C(H,\vect,p, S_N,\Omega,v)\left(\|f\|_{L^\infty(v,\Omega)}+\|g\|_{L^\infty(v,\Omega)}\right)
 \end{align*}
 where $H(t)=t^r\exp(t)$ for any $r>0$, and the constant is independent of $u$ and $f$.  In particular, if $u$ is a degenerate weak solution,  $u\in L^H(v,\Omega)$ with this bound.
\end{cor}

By modifying the proof of Theorem~\ref{thm:no-gain} we can also prove an endpoint result for Theorem~\ref{thm:power-gain}.  This complements the integrability results proved in~\cite{macdonald2023}.  

\begin{cor} \label{cor:endpt-powergain}
    Given a bounded domain $\Omega\subset \mathbb{R}^n$ and $1<p<\infty$, suppose $Q$ and $v$ are such that 
$v\in L^1(\Omega)$, $|Q|_\op^{\frac{p}{2}} \leq kv$, and $\Omega$ supports the Sobolev inequality \eqref{sobolev} with $N(t)=t^{\sigma p}$,  $\sigma>1$. Let $f\in L^A(v,\Omega)$ and $g\in L^B(v,\Omega)$, where $A(t)=t^{\sigma'(p-1)}\log(e+t)^{\sigma'(p-1)}$ and $B(t) = t^{p\sigma'}\log(e+t)^{p\sigma'}$.    Then, given any degenerate weak subsolution (supersolution) $u\in QH_0^{1,p}(v,\Omega)$ of the Dirichlet problem \eqref{dirichlet-problem}, 
 \begin{align*}
 \|u^+~(u^-)\|_{L^H(v,\Omega)} \leq C(A,B,H,\vect,p,S_N,\Omega,v)\left(\|f\|_{L^A(v,\Omega)}+\|g\|_{L^B(v,\Omega)}\right),
 \end{align*}
 where $H(t)=t^r\exp(t)$ for any $r>0$, and the constant is independent of $u$ and $f$.  In particular, if $u$ is a degenerate weak solution,  $u\in L^H(v,\Omega)$ with this bound.
\end{cor}

\begin{remark}
The proof of Theorem~\ref{thm:no-gain} is based on  an iteration inequality and in the proof of  Corollary~\ref{cor:endpt-powergain} we actually prove a stronger iteration inequality that implies the previous one.  Given this, it is tempting to conjecture that a stronger result holds, replacing $H$ by a larger Young function.  However, we have been unable to take advantage of this stronger inequality to prove such a result, and we leave this as an open problem.
\end{remark}

\medskip
Finally, we note that in all of our theorems, we are working over $\R^n$ with  Lebesgue measure.  However, all of our proofs go through in a more general  measure space with measure $\mu$, provided that $\mu(\Omega)<\infty$ and $\mu$ has the minimal regularity required for the theory of Orlicz spaces to still hold.  Details are left to the interested reader.

\medskip

The remainder of this paper is organized as follows.  In Section~\ref{section:prelim} we give some preliminary results about Orlicz spaces and degenerate weak solutions of the Dirichlet problem.  In Section~\ref{section:main-proof}, we prove Theorems~\ref{thm:power-gain} and~\ref{thm:orlicz-gain}.  We have organized the proofs to show the  features they have in common, which makes it easier to adapt the argument to other settings.  In Section~\ref{section:nogain-proof} we prove Theorem~\ref{thm:no-gain} and Corollary~\ref{cor:endpt-powergain}.  Finally, in Section~\ref{section:geometric} we give some applications of our results to specific geometric settings considered in~\cite{korobenko2024,Korobenko:2016ue}.

\section{Preliminaries}
\label{section:prelim}

In this section we gather some preliminary definitions and results.
We begin with some notation.  The constant $n$ will always denote the
dimension of the underlying space $\rn$.  By $C$, $c$, etc., we will
mean a constant that may change at each appearance, but
whose value depends only on the underlying parameters.  If we want to
specify this dependence, we will write, for instance, $C(n,p)$, etc.
If we write $A\lesssim B$, we mean that there exists a constant $c$
such that $A\leq cB$.  If $A\lesssim B$ and $B\lesssim A$, we write
$A\approx B$.

By $\lip_\loc(\Omega)$ we mean the collection of all locally Lipschitz functions defined on $\Omega$; by $\lip_0(\Omega)$ we mean functions in $\lip_\loc(\Omega)$ with compact support in $\Omega$.  

The weight $v$ will always be a
nonnegative, measurable function such that $v\in L^1(\Omega)$.  Given
a set $E\subset \Omega$, $v(E)=\int_E v(x)\,dx$.
Given a weight $v$ and $1\leq p < \infty$, $L^p(v,\Omega)$ is the collection of all those measurable functions $g:\Omega \rightarrow \mathbb{R}$ for which
$$ \|g\|_{L^p(v,\Omega)} = \displaystyle\left(\int_\Omega |g(x)|^p~v(x)dx\right)^{\frac{1}{p}} <\infty.$$
When $p=\infty$, $\|f\|_{L^\infty(v,\Omega)}$ is the essential supremum with respect to the measure $vdx$.
Note that $\lip_0(\Omega) \subset L^p(v,\Omega)$ for every $1\leq p \leq\infty$.

The matrix $Q$ will always be an $n\times n$, nonnegative definite, symmetric matrix-valued function defined on $\Omega$.  We assume that $Q$ is measurable, i.e., each of its coefficient functions is a measurable scalar function.  For fixed $p$, $1\leq p<\infty$,  We will assume that the largest eigenvalue of $Q$ is dominated by $v$:  more precisely, there exists a constant $k$ such that for a.e.~$x\in \Omega$, $|Q(x)|_\op^{\frac{p}{2}} \leq kv(x)$.   As a consequence, we have that $|\sqrt{Q}|_\op^p=|Q|_\op^{\frac{p}{2}} \in L^1(\Omega)$.  

$QL^p(\Omega)$ is the collection  of all
measurable, $\mathbb{R}^n$-valued functions ${\bf g}$ on $\Omega$ that satisfy
$$\|{\bf g}\|_{QL^p(\Omega)}
= \bigg(\int_\Omega |\sqrt{Q(x)}{\bf g}(x)|^p \,dx\bigg)^{\frac{1}{p}}<\infty.$$
With this norm and the equivalence $f\equiv g$ if and only if 
$\|f-g\|_{QL^p(\Omega)}=0,$
$QL^p(\Omega)$ becomes a Banach function space.  (See~\cite[Lemma~2.1]{CRR1}.)
Note that if $\varphi \in \lip_0(\Omega)$, then $\grad \varphi \in QL^p(\Omega)$.

\subsection*{Degenerate subunit vector fields}

Given a measurable function, ${\bf t}:\Omega\rightarrow\mathbb{R}^n$, the operator $\vect\cdot \nabla$ is called a vector field.  It is a degenerate subunit vector field if there is a constant $C(\vect)>0$ so that
\begin{align}\label{cond:subunit}
\|{\bf t}\cdot \nabla \varphi\|_{L^p(v,\Omega)} \leq C(\vect)\|\nabla \varphi\|_{QL^p(\Omega)} 
\end{align}
for every $\varphi\in Lip_0(\Omega)$.  This norm inequality holds if, for example, we assume the pointwise estimate
\begin{equation}\label{eqn:pointwisesubunit} 
\left| {\bf t}(x)\cdot \xi\right| \leq \frac{1}{\sqrt{v(x)}}\left| \sqrt{Q(x)}\xi\right|
\end{equation}
for a.e.~$x\in\Omega$ and every vector $\xi\in\mathbb{R}^n$.  Inequality~\eqref{eqn:pointwisesubunit} is adapted from~\cite{MR2204824}, where $v=1$. (See also~\cite{MR1488238,MR730094,MR}.)  We emphasize that condition \eqref{cond:subunit} is essential to control the divergence term on the right hand side of \eqref{dirichlet-problem} in our proofs below.

As a consequence of this definition, we have that our divergence term is a generalization of the  term $\Div(\vecg)$ that appears in a classical formulation of an elliptic PDE.  If $v=1$, then \eqref{eqn:pointwisesubunit} implies that $|\vect \cdot \xi| \leq |\xi|^2$, which in turn implies that $|\vect|\leq 1$.  Therefore, given a vector function $\vecg$, if we define $\vect = \vecg/|\vecg|$ and fix $g$ such that $|g|^{p-2}g=|\vecg|$, then
\[ \Div(|g|^{p-2}g\vect ) = \Div(\vecg).  \]

\subsection*{Orlicz spaces}
  Here we gather some
essential results about these spaces.  For complete information, we
refer to \cite{KR,RR}; for a succinct summary, see~\cite[Chapter~5]{CMP}.  
A Young function is a function $A : [0,\infty)\rightarrow
[0,\infty)$ that is continuous, convex, strictly increasing, 
$A(0)=0$, and $\frac{A(t)}{t}\rightarrow \infty$ as $t\rightarrow
\infty$.   Given a Young
function $A$, define $L^A(v,\Omega)$ to be the Banach function space of
measurable functions $h:\Omega\rightarrow \mathbb{R}$  with norm
$$\|h\|_{L^A(v,\Omega)} = \inf\bigg\{\lambda>0~:~\int_\Omega
A\left(\frac{|h(x)|}{\lambda}\right)~v(x)dx \leq 1\bigg\}<\infty.$$
Generally, the norm cannot be computed explicitly unless $A(t)=t^p$ and $L^A(v,\Omega)=L^p(v,\Omega)$.  However, the norm of a characteristic function can be computed.  The following lemma is an immediate consequence of the definition of the Luxembourg norm.

\begin{lemma} \label{lemma:norm-char}
Given a Young function $A$ and a set $E\subset \Omega$, 
\[  \|\chi_E\|_{L^A(v,\Omega)} = A^{-1}(v(E)^{-1})^{-1}.  \]
\end{lemma}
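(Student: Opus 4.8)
The plan is to unwind the definition of the Luxembourg norm directly. For $h=\chi_E$, the defining condition
\[
\int_\Omega A\!\left(\frac{\chi_E(x)}{\lambda}\right)v(x)\,dx \le 1
\]
simplifies, since $A(0)=0$, to $A(1/\lambda)\,v(E)\le 1$, i.e. $A(1/\lambda)\le v(E)^{-1}$. (We may assume $0<v(E)<\infty$; the degenerate cases $v(E)=0$ and $v(E)=\infty$ are handled separately, giving norm $0$ and $\infty$ respectively, consistently with the convention $A^{-1}(\infty)=\infty$, $A^{-1}(0^{-1})^{-1}=0$.) Because $A$ is continuous and strictly increasing with $A(0)=0$ and $A(t)\to\infty$, it has a continuous strictly increasing inverse $A^{-1}$, so the condition $A(1/\lambda)\le v(E)^{-1}$ is equivalent to $1/\lambda\le A^{-1}(v(E)^{-1})$, that is, $\lambda\ge A^{-1}(v(E)^{-1})^{-1}$.

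Taking the infimum over all admissible $\lambda$ then gives
\[
\|\chi_E\|_{L^A(v,\Omega)} = \inf\{\lambda>0 : \lambda \ge A^{-1}(v(E)^{-1})^{-1}\} = A^{-1}(v(E)^{-1})^{-1},
\]
which is the claimed formula. The only point requiring a word of care is that the infimum is actually attained, which follows from the continuity of $A$ (so that the set of admissible $\lambda$ is closed); this is what lets us replace the infimum of a closed half-line by its left endpoint. There is no real obstacle here — the lemma is purely a matter of bookkeeping with the definition — but the one thing to be careful about is the range of $v(E)$, since if $A$ does not take all values in $(0,\infty)$ on $[0,\infty)$ one must interpret $A^{-1}$ appropriately; for a Young function in the sense defined above, however, $A$ does map $[0,\infty)$ onto $[0,\infty)$, so $A^{-1}(v(E)^{-1})$ is unambiguously defined for every $v(E)\in(0,\infty)$.
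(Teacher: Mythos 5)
Your proof is correct and is precisely the direct unwinding of the Luxembourg norm definition that the paper has in mind when it calls the lemma ``an immediate consequence of the definition.'' The paper gives no written proof beyond that remark, so your careful treatment of the edge cases $v(E)\in\{0,\infty\}$ and of the attainment of the infimum is, if anything, more detailed than what is implicit in the paper.
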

Given Young functions $A,\,B$ we can compare the associated norms by
appealing to a pointwise estimate.  We say that $A(t)\preceq B(t)$ if
there is a $t_0>0$ and a constant $c\geq 1$ depending only on $A,\,B$ so
that $A(t) \leq B(ct)$ for $t\geq t_0$.  Note that if $A(t) \lesssim B(t)$, then $A(t)\preceq B(t)$.  For a proof of the
following result, see~\cite[Theorem~13.3]{KR} or \cite[Section~5.3]{RR}.  

\begin{lemma}\label{normcompare} Given Young functions $A,\,B$, if
  $A\preceq B$, then there exists a constant $C=C(A,B,v,\Omega)$ such
  that for every $f\in L^B(v,\Omega)$,
$$\|f\|_{L^A(v,\Omega)} \leq C\|f\|_{L^B(v,\Omega)}.$$
\end{lemma}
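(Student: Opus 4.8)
The plan is to argue directly from the definition of the Luxemburg norm and the defining inequality of the relation $A\preceq B$, using only convexity of $A$ and the finiteness of $v(\Omega)$; this is the standard argument behind the cited references. By homogeneity of both norms it suffices to treat an $f$ with $\|f\|_{L^B(v,\Omega)}\le 1$, so that by definition of the norm $\int_\Omega B(|f|)\,v\,dx\le 1$. The goal is then to produce an explicit $\lambda$, depending only on $A,B,v,\Omega$, for which $\int_\Omega A(|f|/\lambda)\,v\,dx\le 1$; that $\lambda$ will be the constant $C$.

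First I would fix $t_0>0$ and $c\ge 1$ from the hypothesis $A\preceq B$, so that $A(t)\le B(ct)$ whenever $t\ge t_0$, and split $\Omega$ according to the size of $|f|$. On $\{|f|\ge ct_0\}$ we have $|f|/c\ge t_0$, hence $A(|f|/c)\le B(|f|)$; on the complement, monotonicity of $A$ gives $A(|f|/c)\le A(t_0)$. Integrating against $v\,dx$ and using $v\in L^1(\Omega)$,
\[ \int_\Omega A\!\left(\frac{|f|}{c}\right)v\,dx \;\le\; \int_\Omega B(|f|)\,v\,dx + A(t_0)\,v(\Omega) \;\le\; 1+A(t_0)\,v(\Omega)\;=:\;M, \]
where $M<\infty$ precisely because $v(\Omega)<\infty$, and we may assume $M\ge 1$. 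To conclude I would rescale: since $A$ is convex with $A(0)=0$, one has $A(\theta t)\le\theta A(t)$ for $0<\theta\le 1$, so applying this with $\theta=1/M$ yields
\[ \int_\Omega A\!\left(\frac{|f|}{cM}\right)v\,dx \;\le\; \frac{1}{M}\int_\Omega A\!\left(\frac{|f|}{c}\right)v\,dx \;\le\; 1, \]
whence $\|f\|_{L^A(v,\Omega)}\le cM$. Undoing the normalization gives the claim with $C=c\bigl(1+A(t_0)\,v(\Omega)\bigr)$, which indeed depends only on $A,B,v,\Omega$.

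There is no real obstacle here: the only points that require attention are that $v(\Omega)$ is finite — which is exactly where the standing hypothesis $v\in L^1(\Omega)$ enters — and that the rescaling parameter $\theta=1/M$ is kept at most $1$ so that convexity of $A$ may be invoked. The degenerate case $\|f\|_{L^B(v,\Omega)}=0$ forces $f=0$ $v$-a.e.\ (letting $\lambda\to 0^+$ in the defining inequality and using monotone convergence) and is then trivial.
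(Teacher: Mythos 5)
Your argument is correct, and it is the standard Luxemburg-norm proof that the paper itself does not reproduce but instead delegates to the cited references (\cite[Theorem~13.3]{KR}, \cite[Section~5.1]{RR}). The two key steps --- splitting $\Omega$ according to whether $|f|\ge ct_0$ and then rescaling via the elementary convexity inequality $A(\theta t)\le\theta A(t)$ for $0<\theta\le1$ --- are exactly where the finiteness of $v(\Omega)$ enters, and your treatment of the normalization and the degenerate case $\|f\|_{L^B(v,\Omega)}=0$ is complete. This is the argument the paper is relying on; nothing is missing.
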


\begin{remark}
    The fact that we can implicitly assume $t_0>0$ in Lemma~\ref{normcompare} is a consequence of our assumption that $\Omega$ is bounded and $v\in L^1(\Omega)$.  
\end{remark}

Given a Young function $A$, define the conjugate Orlicz function,
$\bar{A}$, by
\[ \bar{A}(t) = \sup\{ st-A(s) : s> 0 \}. \]
The pair $A,\,\bar{A}$ satisfy a  version of H\"older's
inequality in the scale of Orlicz spaces.  If $f\in L^A(v,\Omega)$ and
$g\in L^{\bar{A}}(v,\Omega)$, then $fg\in L^1(v,\Omega)$ and
\begin{equation} \label{holders}
\int_\Omega |f(x)g(x)|v(x)\,dx \leq 2\|f\|_{L^A(v,\Omega)}\|g\|_{L^{\bar{A}}(v,\Omega)}.
\end{equation}
More generally, we have that if $A,\,B,\,C$ are three Young functions such that
\[  B^{-1}(t)C^{-1}(t) \preceq A^{-1}(t), \]
then there exists a constant $K=K(A,B,C,v,\Omega)$ such that 
\begin{equation} \label{eqn:gen-holder}
\|fg\|_{L^A(v,\Omega)} \leq K\|f\|_{L^B(v,\Omega)}\|g\|_{L^C(v,\Omega)}. 
\end{equation}
For a proof of \eqref{holders} and \eqref{eqn:gen-holder}, see~\cite[Section~3.3]{RR}.
\medskip

In our main results we consider several kinds of Young functions:
power functions,
\[ \Phi(t) = t^r, \qquad 1\leq r < \infty; \]
power functions with a so-called ``log-bump",
\begin{equation} \label{eqn:log-bump}
  \Psi(t) = t^r\log(e+t)^q, \qquad 1\leq r,\, q < \infty;
\end{equation}
and exponential functions,
\[ \Theta(t) = \exp(t^r) -1, \qquad 1\leq r < \infty, \]
and
\[ \Xi(t) = t^r\exp(t), \qquad r > 0.  \]
Formulas for the inverse and conjugate functions
of $\Phi$ and $\Theta$ are immediate; for $\Psi$, see, for instance~\cite[Chapter~5]{CMP}.   We have that
\begin{gather*}
    \Phi^{-1}(t) = t^{\frac{1}{r}}, 
    \qquad \bar{\Phi}(t) \approx t^{r'}, \\
    \Psi^{-1}(t) \approx \frac{t^{\frac{1}{r}}}{\log(e+t)^{\frac{q}{r}}},
    \qquad \bar{\Psi} \approx \frac{t^{r'}}{\log(e+t)^{q(r'-1)}},\\
    \Theta^{-1}(t) \approx \log(1+t)^{\frac{1}{r}}, 
    \qquad \bar{\Theta}(t) \approx t\log(1+t)^{\frac{1}{r}}. 
\end{gather*}
In each case, the implicit constants depend on $r,\,q$.


\subsection*{Degenerate Sobolev spaces and weak solutions}
We now give a precise definition of degenerate weak
(sub)solutions to the Dirichlet problem~\eqref{dirichlet-problem}.  We only sketch the relevant details;  
see~\cite{CW,CRR1,CRR2,GT,MR,MRW,R,SW2} for further information.  

Fix $1\leq p<\infty$.  Given $v$ and $Q$, the
solution space for the Dirichlet problem is the matrix weighted
Sobolev space $QH_0^{1,p}(v,\Omega)$,  which is the 
 completion  of the space
$\lip_0(\Omega)$  with respect to the norm
$$\|\psi\|_{QH_0^{1,p}(v,\Omega)} = \|\psi\|_{L^p(v,\Omega)} + \|\nabla \psi\|_{QL^p(\Omega)}.$$

Formally,
$QH_0^{1,p}(v,\Omega)$ is the set of equivalence classes of Cauchy
sequences of $\lip_0(\Omega)$ functions.  However, since the spaces $L^p(v,\Omega)$ and
$ QL^p(\Omega)$ are complete, to each equivalence class
$[\{\psi_j\}]$ in  $QH_0^{1,p}(v,\Omega)$ we can associate a unique pair
$(u, {\bf g}) \in L^p(v,\Omega)\times QL^p(\Omega)$.  The norm of the pair is the norm of the 
equivalence class: that is,
\begin{equation*}
  \|(u,{\bf g})\|_{QH_0^{1,p}(v,\Omega)}
  = \|u\|_{L^p(v,\Omega)} + \|{\bf g}\|_{QL^p(\Omega)}
  =\lim_{j\rightarrow\infty}\left(\|\psi_j\|_{L^p(v,\Omega)}
    + \|\nabla \psi_j\|_{QL^p(\Omega)}\right).
\end{equation*}
Conversely, given a pair $(u,{\bf g})$ we will say that it is in
$QH_0^{1,p}(v,\Omega)$ if there exists a sequence $\{\psi_j\}_j\subset
\lip_0(\Omega)$ such that $(\psi_j, \nabla \psi_j)$ converges to $(u,{\bf g})$
in $L^p(v,\Omega)\times QL^p(\Omega)$.

Hereafter, we will denote ${\bf g}$ by $\nabla u$ since $\bf g$ plays
the role of a weak gradient of $u$, and we will often write $u\in QH_0^{1,p}(v,\Omega)$ rather than explicitly identifying the pair $(u, {\bf g})$ or $(u,\grad u)$.  However, we want to stress that the function ${\bf g}$ may not be the weak gradient of $u$ in the sense of classical Sobolev
spaces.  For further details, see~\cite[Section~2]{CRR1}, \cite[p.~1877]{SW2}, or \cite[Section~2.1]{MR643158}.

Throughout this paper we will assume the existence of a Sobolev inequality in the scale of Orlicz spaces: Given $v$ and $Q$, there exists a Young function $N$ such that for all $\varphi \in \lip_0(\Omega)$,
\begin{equation} \label{eqn:orlicz-sobolev}
\|\varphi\|_{L^N(v,\Omega)} \leq S_N \|\sqrt{Q}\grad \varphi\|_{L^p(\Omega)}.  
\end{equation}
This inequality extends to functions in $QH^{1,p}_0(v,\Omega)$.  

\begin{lemma}\label{sobolev2}
Suppose that for all $\varphi \in \lip_0(\Omega)$, inequality \eqref{eqn:orlicz-sobolev} holds.  Then it also holds, with the same constant, for any $u\in QH_0^{1,p}(v,\Omega)$. 
\end{lemma}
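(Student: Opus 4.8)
The plan is to combine the definition of $QH_0^{1,p}(v,\Omega)$ as a completion with the lower semicontinuity of the Luxembourg norm under almost-everywhere convergence. Fix $u\in QH_0^{1,p}(v,\Omega)$, identified with the pair $(u,\nabla u)\in L^p(v,\Omega)\times QL^p(\Omega)$, and choose $\{u_j\}\subset\lip_0(\Omega)$ with $u_j\to u$ in $L^p(v,\Omega)$ and $\nabla u_j\to\nabla u$ in $QL^p(\Omega)$. Since $L^p(v,\Omega)$-convergence implies convergence in $v\,dx$-measure, after passing to a subsequence (not relabeled) we may assume $u_j\to u$ pointwise $v\,dx$-a.e.\ on $\Omega$. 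On the right-hand side, the triangle inequality in $QL^p(\Omega)$ gives $\|\sqrt{Q}\nabla u_j\|_{L^p(\Omega)}=\|\nabla u_j\|_{QL^p(\Omega)}\to\|\nabla u\|_{QL^p(\Omega)}$; in particular this sequence is bounded.

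Next I would record the Fatou property of the Luxembourg norm: if $h_k\to h$ pointwise $v\,dx$-a.e.\ and $M:=\liminf_k\|h_k\|_{L^N(v,\Omega)}<\infty$, then $h\in L^N(v,\Omega)$ and $\|h\|_{L^N(v,\Omega)}\leq M$. This follows directly from the definition of the norm: for any $\lambda>M$ there is a subsequence along which $\|h_k\|_{L^N(v,\Omega)}<\lambda$, hence $\int_\Omega N(|h_k|/\lambda)\,v\,dx\leq 1$ along that subsequence, and since $N$ is continuous the classical Fatou lemma yields $\int_\Omega N(|h|/\lambda)\,v\,dx\leq 1$, i.e.\ $\|h\|_{L^N(v,\Omega)}\leq\lambda$; letting $\lambda\downarrow M$ finishes it.

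Applying this with $h_j=u_j$, $h=u$, and using \eqref{eqn:orlicz-sobolev} for each $u_j\in\lip_0(\Omega)$, we obtain
\[
\|u\|_{L^N(v,\Omega)}\leq\liminf_{j\to\infty}\|u_j\|_{L^N(v,\Omega)}\leq\liminf_{j\to\infty}S_N\|\sqrt{Q}\nabla u_j\|_{L^p(\Omega)}=S_N\|\sqrt{Q}\nabla u\|_{L^p(\Omega)},
\]
which is \eqref{eqn:orlicz-sobolev} for $u$ with the same constant $S_N$.

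There is no serious obstacle here; the one point needing care is that $L^p(v,\Omega)$-convergence of $\{u_j\}$ does not by itself yield convergence in $L^N(v,\Omega)$, nor even an a priori bound on $\|u_j\|_{L^N(v,\Omega)}$ without invoking the Sobolev inequality, so one cannot simply pass to the limit in the norms directly. Routing the argument through a $v\,dx$-a.e.\ convergent subsequence and the Fatou property of the Luxembourg norm circumvents this. (Alternatively, one could observe that \eqref{eqn:orlicz-sobolev} applied to the differences $u_j-u_k$ shows $\{u_j\}$ is Cauchy, hence convergent, in the complete space $L^N(v,\Omega)$, and then identify the limit with $u$ via a common a.e.-convergent subsequence; this gives the stronger conclusion that $u_j\to u$ in $L^N(v,\Omega)$.)
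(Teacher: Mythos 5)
Your proof is correct and follows essentially the same route as the paper: pass to a $v\,dx$-a.e.\ convergent subsequence of the approximating Lipschitz sequence, apply \eqref{eqn:orlicz-sobolev} to each term, and invoke the Fatou property of the Luxembourg norm to pass to the limit. The only difference is that you prove the Fatou property from scratch via the classical Fatou lemma, whereas the paper simply cites it from Bennett and Sharpley; both are fine.
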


\begin{proof}
    Fix $u\in QH_0^{1,p}(v,\Omega)$. Then there exists a sequence $\{\varphi_j\}_{j=1}^\infty$ that converges to $u$ in $QH_0^{1,p}(v,\Omega)$.  Since this implies that $\varphi_j$ converges to $u$ in $L^p(v,\Omega)$,  by passing to a subsequence we may also assume that $\varphi_j$ converges to $u$ pointwise $v$-a.e.  Therefore, by Fatou's lemma in the scale of Banach function spaces~\cite[Chapter~1, Lemma~1.5]{MR928802},
    \[ \|u\|_{L^N(v,\Omega)} 
    \leq \liminf_{j\rightarrow \infty} \|\varphi_j\|_{L^N(v,\Omega)} 
    \leq S_N  \liminf_{j\rightarrow \infty}\|\grad \varphi_j\|_{QL^p(\Omega)}
    = S_N\|\grad u\|_{QL^p(\Omega)}. 
\]
\end{proof}

For our proof below we need a truncation property of $QH_0^{1,p}(v,\Omega)$ that is the analog of the fact that if $\varphi \in \lip_0(\Omega)$, then $\max(\varphi,0)\in \lip_0(\Omega)$.  This result was proved in~\cite[Lemma~2.14]{MR4280269} when $p=2$, and in~\cite[Lemma~2.12]{10.48550/arxiv.2210.12441} using the same ideas for $1<p<\infty$.  We note that it is for the proof of this result that we need to assume that $v\in L^1(\Omega)$ and that $|Q|_\op^{\frac{p}{2}} \leq kv$.  

\begin{lemma} \label{lemma:trunc}
Given $(u,\grad u)\in QH_0^{1,p}(v,\Omega)$, for every $r>0$ let $S(r)=\{ x\in \Omega : u(x)>r)$. Then $((u-r)_+, \chi_{S(r)}\grad u) \in QH_0^{1,p}(v,\Omega)$.  
\end{lemma}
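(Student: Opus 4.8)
The plan is to start with an approximating sequence $\{\varphi_j\}\subset\lip_0(\Omega)$ with $(\varphi_j,\grad\varphi_j)\to(u,\grad u)$ in $L^p(v,\Omega)\times QL^p(\Omega)$, and then truncate \emph{at a fixed level} to produce a new approximating sequence for the pair $((u-r)_+,\chi_{S(r)}\grad u)$. For $\varphi_j\in\lip_0(\Omega)$ the function $(\varphi_j-r)_+$ is again Lipschitz but \emph{need not have compact support}, so the natural candidate is $\psi_j:=(\varphi_j-r)_+$ multiplied by a fixed cutoff, or — more simply — one observes that since $\varphi_j$ has compact support in $\Omega$, the set $\{\varphi_j>r\}$ is compactly contained in $\Omega$ (it lies inside $\supp\varphi_j$), so in fact $(\varphi_j-r)_+\in\lip_0(\Omega)$ already, with $\grad (\varphi_j-r)_+=\chi_{\{\varphi_j>r\}}\grad\varphi_j$ a.e. Thus no extra cutoff is needed. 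The work is then to show $(\psi_j,\grad\psi_j)\to((u-r)_+,\chi_{S(r)}\grad u)$ in the product norm.

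First I would handle the $L^p(v,\Omega)$ convergence of $\psi_j\to(u-r)_+$: this follows because $t\mapsto(t-r)_+$ is $1$-Lipschitz, so $|\psi_j-(u-r)_+|\le|\varphi_j-u|$ pointwise, and the right side tends to $0$ in $L^p(v,\Omega)$ by hypothesis. Passing to a subsequence we may also assume $\varphi_j\to u$ pointwise $v$-a.e., hence $\grad\varphi_j\to\grad u$ in $QL^p(\Omega)$ still holds along this subsequence. Second, and this is the main obstacle, I would show $\chi_{\{\varphi_j>r\}}\grad\varphi_j\to\chi_{S(r)}\grad u$ in $QL^p(\Omega)$, i.e. that $\int_\Omega|\sqrt Q(\chi_{\{\varphi_j>r\}}\grad\varphi_j-\chi_{S(r)}\grad u)|^p\,dx\to0$. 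Split this difference as
\[
\chi_{\{\varphi_j>r\}}(\grad\varphi_j-\grad u)+(\chi_{\{\varphi_j>r\}}-\chi_{S(r)})\grad u.
\]
The first term is controlled by $\|\sqrt Q(\grad\varphi_j-\grad u)\|_{L^p}^p\to0$. For the second term, the difficulty is that pointwise convergence $\varphi_j\to u$ does not by itself force $\chi_{\{\varphi_j>r\}}\to\chi_{\{u>r\}}$ at points where $u=r$; however, at $v$-a.e.\ point where $u(x)\ne r$ one does get $\chi_{\{\varphi_j>r\}}(x)\to\chi_{S(r)}(x)$, so $(\chi_{\{\varphi_j>r\}}-\chi_{S(r)})\grad u\to 0$ pointwise $v$-a.e.\ off the set $\{u=r\}$. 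To conclude I would invoke the fact that $\grad u$, as an element of $QL^p(\Omega)$, satisfies $\chi_{\{u=r\}}\grad u=0$ in $QL^p(\Omega)$ for a.e.\ level $r$ — this is where the structural hypotheses $v\in L^1(\Omega)$ and $|Q|_\op^{p/2}\le kv$ enter, guaranteeing $|\sqrt Q\,\grad u|^p\in L^1(\Omega)$ and that the ``co-area''–type decomposition of this finite measure over the level sets $\{u=r\}$ charges only a null set of levels $r$. Granting this, $(\chi_{\{\varphi_j>r\}}-\chi_{S(r)})\grad u\to0$ $|\sqrt Q|^p\,dx$-a.e., it is dominated by $|\sqrt Q\,\grad u|^p\in L^1(\Omega)$, and dominated convergence finishes the second term.

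The remaining subtlety is that the argument above gives the conclusion only for \emph{almost every} $r>0$, whereas the statement asserts it for every $r>0$; I would remove this restriction by a second limiting argument, approximating an arbitrary level $r$ by a sequence of good levels $r_k\downarrow r$ (or $\uparrow r$), noting $((u-r_k)_+,\chi_{S(r_k)}\grad u)\to((u-r)_+,\chi_{S(r)}\grad u)$ in $L^p(v,\Omega)\times QL^p(\Omega)$ by monotone/dominated convergence (again using $v\in L^1$ and $|\sqrt Q\,\grad u|^p\in L^1$), and then using completeness of $QH_0^{1,p}(v,\Omega)$ together with the fact that a limit in the product norm of pairs lying in $QH_0^{1,p}(v,\Omega)$ again lies in $QH_0^{1,p}(v,\Omega)$. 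As noted in the excerpt, this is essentially the argument of~\cite[Lemma~2.14]{MR4280269} and~\cite[Lemma~2.12]{10.48550/arxiv.2210.12441}, so I would cite those for the routine details and emphasize only the two limiting steps and the role of the standing hypotheses on $v$ and $Q$.
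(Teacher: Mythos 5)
The paper itself does not prove Lemma~\ref{lemma:trunc}: it cites \cite[Lemma~2.14]{MR4280269} (for $p=2$) and \cite[Lemma~2.12]{10.48550/arxiv.2210.12441} (for $1<p<\infty$), and only remarks that the standing hypotheses $v\in L^1(\Omega)$ and $|Q|_\op^{p/2}\leq kv$ are used there. So there is no in-paper argument to compare against. Your reconstruction is sound in outline: taking Lipschitz approximants $\varphi_j$, observing that $(\varphi_j-r)_+\in\lip_0(\Omega)$ already when $r>0$, splitting the gradient difference, invoking the co-area observation that the finite measure $\nu(E)=\int_E|\sqrt Q\grad u|^p\,dx$ can charge at most countably many level sets $\{u=r\}$, and closing with a monotone limit in $r$ together with closedness of $QH_0^{1,p}(v,\Omega)$ in $L^p(v,\Omega)\times QL^p(\Omega)$.

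The one concrete imprecision is your attribution of the role of the standing hypotheses. You write that they ``guarantee $|\sqrt Q\grad u|^p\in L^1(\Omega)$,'' but that is automatic from $\grad u\in QL^p(\Omega)$ and needs no hypothesis on $v$ or $|Q|_\op$ at all. The real reason $|Q|_\op^{p/2}\leq kv$ is indispensable in your argument is left unstated, and this is a genuine gap in the write-up: after passing to a subsequence you only have $\varphi_j\to u$ \emph{$v$-a.e.}, while the dominated convergence step for $\int_\Omega|\chi_{\{\varphi_j>r\}}-\chi_{S(r)}|\,|\sqrt Q\grad u|^p\,dx$ requires \emph{Lebesgue}-a.e.\ convergence of the integrand. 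The bridge is precisely that $|\sqrt Q|_\op^p\leq kv$ forces $\sqrt Q=0$ Lebesgue-a.e.\ on $\{v=0\}$, so the integrand vanishes identically there regardless of what $\varphi_j$ does, while on $\{v>0\}$ the $v$-a.e.\ convergence is already Lebesgue-a.e.\ convergence. (Similarly, the absolute continuity $\nu\ll v\,dx$ implied by this hypothesis is what makes the level set $\{u=r\}$, which is only determined up to $v$-null sets, a well-defined $\nu$-measurable object to feed into your co-area argument.) Once this is spelled out, the argument goes through.
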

\medskip

We can now define the degenerate weak (sub)solution to the Dirichlet problem. 

\begin{defn}\label{weaksol}
  Given $1<p<\infty$, a pair $(u,\nabla u)\in QH_0^{1,p}(v,\Omega)$ is 
  a degenerate weak solution of the Dirichlet problem \eqref{dirichlet-problem}
  if
\begin{multline*}
\int_\Omega  |\sqrt{Q}(x)\nabla u(x)|^{p-2}\nabla\varphi(x) \cdot Q(x)\nabla u (x) \,dx \\
  = \int_\Omega \big(f(x)|f(x)|^{p-2} \varphi(x)+  |g(x)|^{p-2}g(x) \vect \cdot \grad\varphi(x)\big)\,v(x)dx
\end{multline*}  
for every  $\varphi\in \lip_0(\Omega)$.  It is a
 weak subsolution (supersolution) if this holds with the equality ``$=$" replaced by the inequality ``$\leq$" (``$\geq$") for any nonnegative $\varphi\in \lip_0(\Omega)$.

\end{defn}

\begin{remark} \label{rem:QH-test}
Note that if $(h,\nabla h)\in QH_0^{1,p}(v,\Omega)$ with $h(x)\geq 0$ $v$-a.e., then by a
standard limiting argument we may use $h$ as our test function in
Definition~\ref{weaksol}.
\end{remark}

\begin{remark}
    If $(u,\nabla u)$ is a weak supersolution of \eqref{dirichlet-problem}, then it is immediate that $-(u,\nabla u)$ is a weak subsolution of \eqref{dirichlet-problem} with $f$ and $g$ replaced by $-f$ and $-g$.  Further, if $(u,\nabla u)$ is a weak solution, it is both a subsolution and supersolution. 
\end{remark}

Finally, given any extended real valued function $u$, let $u^+=\max(u,0)$; and let $u^-=\max(-u,0)$. Note that for any $u$, $u=u^+-u^-$.  



\section{Boundness: Proof of results}
\label{section:main-proof}

In this section we prove Theorems~\ref{thm:power-gain} and~\ref{thm:orlicz-gain}.  The proofs are based on the classical iteration method of de Giorgi~\cite{DeGiorgi}.  Our version, which builds on the approach used in~\cite{MR4280269}, is modeled on the adaptation of de Giorgi iteration given by Korobenko, {\em et al.}~\cite{Korobenko:2016ue}.  Their proof in turn  used ideas from Christ~\cite{MR1912731}.  The proofs of both  theorems are very similar in structure, though the specific argument changes depending on the Sobolev inequality we assume. We will first give the parts of the proof that are common to both; subsequently,  we will give the specific arguments required for each result.

\subsection*{Foundation of the iteration argument}
We divide our argument into three lemmas. In the first, we prove an inequality which is the foundation of the iteration argument.  Note that in each proof below we will determine the Young function $D$ which appears in this lemma.

\begin{lemma}\label{Foundation1}
Let $1<p<\infty$ and suppose that the Sobolev inequality~\eqref{sobolev} holds for a Young function $N$.  Set $M(t) = \overline{N}(t^{p-1})$.  Let $u$ be a  degenerate weak subsolution of \eqref{dirichlet-problem} with data $f\in L^A(v,\Omega)$ and $g\in L^B(v,\Omega)$ for Young functions $A,\,B$.  If $D$ is a Young function satisfying 
\begin{equation} \label{eqn:D-assumption} 
A^{-1}(t)D^{-1}(t) \lesssim M^{-1}(t), \qquad  B^{-1}(t)D^{-1}(t) \lesssim  t^\frac{1}{p},
\end{equation}
then for all  $s> r>0$, 
\begin{equation}\label{eqn:next-step}
(s-r) N^{-1}(v(S(s))^{-1})^{-1}
\leq L
\left(\|f\|_{L^A(v,\Omega)}+\|g\|_{L^B(v,\Omega)}\right) D^{-1}(v(S(r))^{-1})^{-1},
\end{equation}
where  $L=L(A,B,D,N,\vect,p, S_N)$ and $S(r) = \{x\in\Omega : u(x)>r\}$.
\end{lemma}

\begin{proof}
Fix $r>0$, let $\phi_r=(u-r)_+$, and define $S(r)=\{ x\in \Omega : u(x)>r\}$.  By Lemma~\ref{lemma:trunc},  $(\phi_r, \grad\phi_r) = ((u-r)_+, \chi_{S(r)}\grad u)\in QH_0^{1,p}(v,\Omega)$; thus, by Remark~\ref{rem:QH-test}, it can be used as a nonnegative test function for the degenerate weak subsolution~$u$. By the Sobolev inequality (Lemma~\ref{sobolev2}) and the definition of a subsolution, we get that 
\begin{align*}
    \|\grad \phi_r\|_{QL^p(\Omega)}^p 
    & =   \int_{S(r)} |\sqrt{Q}\grad \phi_r|^p\,dx \\
    & =  \int_{S(r)} |\sqrt{Q}\grad \phi_r|^{p-2} Q\grad \phi_r \cdot \grad \phi_r \,dx \\
    &   =  \int_{S(r)} |\sqrt{Q}\grad u|^{p-2} Q\grad u \cdot \grad \phi_r \,dx \\
    &\leq  \int_{S(r)} |f|^{p-1}\phi_r \,vdx + \int_{S(r)} |g|^{p-1}|\vect\cdot \grad \phi_r|\,vdx. 
\end{align*}
We estimate each integral in the last line separately.  For the first, we apply H\"older's inequality in the scale of Orlicz spaces~\eqref{holders} with Young functions $N$ and $\bar{N}$, and again apply the Sobolev inequality and rescaling, using the fact that $M(t)=\bar{N}(t^{p-1})$, to get
\begin{multline*} \int_{S(r)} |f|^{p-1}\phi_r \,vdx  \\
\leq
2\||f|^{p-1}\chi_{S(r)}\|_{L^{\bar{N}}(v,\Omega)}\|\phi_r\|_{L^N(v,\Omega)}
\leq
2S_N\|f\chi_{S(r)}\|_{L^{M}(v,\Omega)}^{p-1}\|\nabla \phi_r\|_{QL^p(\Omega)}.
\end{multline*}
To estimate the second integral, we use 
H\"older's inequality with exponents $p$ and $p'$, and the subuniticity condition~\eqref{cond:subunit} to get
\begin{multline*}  
\int_{S(r)} |g|^{p-1}|\vect\cdot \grad \phi_r|\,vdx \\
    \leq  
    \||g|^{p-1}\chi_{S(r)}\|_{L^{p'}(v,\Omega)}\|\vect\cdot \grad \phi_r\|_{L^p(v,\Omega)}
    \leq
  C(\vect)\|g\chi_{S(r)}\|_{L^p(v,\Omega)}^{p-1}\|\nabla \phi_r\|_{QL^p(\Omega)}.
\end{multline*}
If we combine the above inequalities, rearrange terms, and take the $p-1$ root, we get
\begin{equation*}   
\|\grad \phi_r\|_{QL^p(v,\Omega)} 
 \leq 
L(\vect,p,S_N)\big(\|f\chi_{S(r)}\|_{L^{M}(v,\Omega)}+\|g\chi_{S(r)}\|_{L^p(v,\Omega)}\big).  
\end{equation*}
Because \eqref{eqn:D-assumption} holds, we can apply the Sobolev inequality and the generalized H\"older's inequality~ \eqref{eqn:gen-holder} to the integrals of $f$ and $g$ to get 
\begin{multline*}   
\|\phi_r\|_{L^N(v,\Omega)} 
\leq S_N\|\grad \phi_r\|_{QL^p(v,\Omega)} \\
\leq L(A,B,D,N,\vect,p,S_N)\big(\|f\|_{L^A(v,\Omega)}+\|g\|_{L^B(v,\Omega)}\big)\|\chi_{S(r)}\|_{L^D(v,\Omega)}.
\end{multline*}

To complete the proof, fix $s>r$. Then $S(s)\subset S(r)$ and for all $x\in  S(s)$,  $\phi_r(x)>s-r$.  Hence,
\begin{equation*}
    \|\phi_r\|_{L^N(v,\Omega)} \geq \|\phi_r\chi_{S(s)}\|_{L^N(v,\Omega)} 
    \geq (s-r)\|\chi_{S(s)}\|_{L^N(v,\Omega)}.  
\end{equation*}
If we combine this with the above inequality, we get
\[
(s-r)\|\chi_{S(s)}\|_{L^N(v,\Omega)}
\leq L
\left(\|f\|_{L^A(v,\Omega)}+\|g\|_{L^B(v,\Omega)}\right)\|\chi_{S(r)}\|_{L^D(v,\Omega)}.
\]
By Lemma~\ref{lemma:norm-char} we can rewrite this as 
\begin{equation} 
(s-r) N^{-1}(v(S(s))^{-1})^{-1}
\leq L
\left(\|f\|_{L^A(v,\Omega)}+\|g\|_{L^B(v,\Omega)}\right) D^{-1}(v(S(r))^{-1})^{-1},
\end{equation}
which is~ \eqref{eqn:next-step}.
\end{proof}
\medskip

Our second lemma is a norm regularity result for solutions of the Dirichlet problem.   The proof uses the weak solution itself as a test function and leverages the Sobolev inequality.  This argument is classical:  see, for instance, \cite[Theorem 8.1]{GT}.   In the study of degenerate PDEs, this technique has been used to prove Caccipolli-type estimates for a maximum principle with a different scale of Orlicz Sobolev-type inequalities in~\cite[Chapter 5]{Korobenko:2016ue}; it was used in~\cite{MR4280269}, which considered the case $p=2$ of Theorem~\ref{thm:power-gain}; and it was used in~\cite[Lemma 3.1]{CRR1} to prove weighted $L^p$-norm regularity estimates in the study of Neumann problems.

\begin{lemma} \label{lemma-m0}
 Given Young functions $N$, $A$, and $B$, suppose $N$ satisfies \eqref{sobolev}, and $A$ and $B$ satisfy
 \begin{equation} \label{eqn:mo1} 
 A(t)\succeq M(t)=\bar{N}(t^{p-1}), \qquad B(t)\succeq t^p. 
 \end{equation}
 If $u$ is a degenerate weak subsolution of \eqref{dirichlet-problem} with $f\in L^A(v,\Omega)$ and $g\in L^B(v,\Omega)$,  then $u$ satisfies  
\begin{align}\label{lemma-pre-m0}\|u^+\|_{L^N(v,\Omega)} \leq C(A,B,N, \vect,p,S_N)\left(\|f\|_{L^A(v,\Omega)}+\|g\|_{L^B(v,\Omega)}\right).
\end{align}
\end{lemma}

\begin{remark}
    Note that the assumption~\eqref{eqn:D-assumption} in Lemma~\ref{Foundation1} implies \eqref{eqn:mo1}.
\end{remark}

\begin{proof} 
Since $u$ is a degenerate weak subsolution, by Remark~\ref{rem:QH-test}, we can use $(u^+,\grad u^+) = (u^+,\chi_{\{u>0\}}\nabla u)$ as a test function in the definition.  Therefore, we can argue as we did in the proof of Lemma~\ref{Foundation1}, but using Lemma~\ref{normcompare} and \eqref{eqn:mo1} instead of the generalized H\"older's inequality, to get
\begin{align*}
 \|\nabla u\|_{QL^p(S(0))}^p 
&\leq  \int_\Omega u^+|f|^{p-1}\,vdx +  \int_{S(0)} |g|^{p-1}|\vect\cdot \grad u|\,vdx\\
&\leq 2\|u^+\|_{L^N(v,\Omega)}\||f|^{p-1}\|_{L^{\overline{N}}(v,\Omega)} 
+ \||g|^{p-1}\|_{L^{p'}(v,\Omega)}\|\vect \cdot \grad u\|_{L^p(S(0))}\\
&=2S_N\|\grad u\|_{QL^p(S(0))}\|f\|_{L^M(v,\Omega)}^{p-1} + C(\vect)\|g\|_{L^p(v,\Omega)}^{p-1}\|\nabla u\|_{QL^p(S(0))}\\
&\leq C(A,B,N,\vect,p,S_N)\left(\|f\|_{L^A(v,\Omega)}^{p-1}+\|g\|_{L^B(v,\Omega)}^{p-1}\right)\|\nabla u\|_{QL^p(S(0))}. 
\end{align*}
If we divide by $\|\nabla u\|_{QL^p(S(0))}$, take the $p-1$ root, and then apply the Sobolev inequality (Lemma~\ref{sobolev2}), we get~\eqref{lemma-pre-m0}. 
\end{proof}

Our third lemma proves the actual iteration inequality we will use in our proofs below.

\begin{lemma} \label{lemma:iteration}
Given the hypotheses of Lemma~\ref{Foundation1}, for fixed $\epsilon>0$, $\tau_0>1$, and for $k \in \N$ define 
\[ D_k = \tau_0\left(\|f\|_{L^A(v,\Omega)}+\|g\|_{L^B(v,\Omega)}\right)\bigg(1-\frac{1}{(k+1)^\epsilon}\bigg),  \]
  Define $D_0=D_1/2$ and for all $k\geq 0$ let $\mu_k=v(S(D_k))$.  Then for $k\geq 1$ we have that
\begin{equation} \label{eqn:foundation}
 N^{-1}(\mu_{k+1}^{-1})^{-1}
\leq L\bigg(\frac{(k+2)^{1+{\epsilon}}}{\epsilon \tau_0}\bigg)
 D^{-1}(\mu_k^{-1})^{-1},
\end{equation}
and 
\begin{equation} \label{eqn:foundation-0term}
 N^{-1}(\mu_{1}^{-1})^{-1} \leq \frac{4L}{\tau_0(1-2^{-\epsilon})} D^{-1}(\mu_0^{-1})^{-1},
\end{equation}
where $L$ is as in~\eqref{eqn:next-step}.
Moreover, we can choose $\tau_0$ sufficiently large that $\mu_0\leq e^{-2}$.  
\end{lemma}

\begin{proof}
By the mean value theorem, we have that for each $k\geq 1$,
\[ D_{k+1} - D_k \geq \tau_0 \left(\|f\|_{L^A(v,\Omega)}+\|g\|_{L^B(v,\Omega)}\right)\frac{\epsilon }{(k+2)^{1+\epsilon}}.
\]
This sequence is decreasing, so if we let $s=D_{k+1}$ and $r=D_k$ and combine this estimate with~\eqref{eqn:next-step}, we get~\eqref{eqn:foundation}.  When $k=0$, the same argument, using that $s-r=D_1/2$,  gives inequality~\eqref{eqn:foundation-0term}.

To see that  for $\tau_0$  sufficiently large  $\mu_0\leq e^{-2}$,  note that if $x\in S(D_0)$, then $2u(x)/D_1 > 1$ $v$-a.e.  Hence, by Lemma~\ref{lemma-m0} and  H\"older's inequality~\eqref{holders} with Young function $N$, 
\begin{align*}
\mu_0 
& \leq 2D_1^{-1}\int_{S(D_0)} u\,vdx \\
& \leq  2D_1^{-1}\|\chi_\Omega\|_{L^{\bar{N}}(v,\Omega)}\|u^+\|_{L^N(v,\Omega)} \\
& \leq D_1^{-1}C(A,B,N,\vect,p,S_N,\Omega,v)\left(\|f\|_{L^A(v,\Omega)}+\|g\|_{L^B(v,\Omega)}\right).
\end{align*}
If we now insert the definition of $D_1$, we see that
\[\mu_0 \leq \frac{C(A,B,N, \vect,p,S_N, \Omega,v)}{\tau_0(1-2^{-\epsilon})}\]
and so we can choose $\tau_0$ so large that $\mu_0\leq e^{-2}$.
\end{proof}

\medskip

We now turn to the proofs of each boundedness theorem.  In each proof, we will show that for $\tau_0$ sufficiently large, $\mu_k\rightarrow 0$ as $k\rightarrow \infty$; by the continuity of the integral we get that $v(S(r_0))=0$, which completes the proof.

\subsection*{Proof of Theorem~\ref{thm:power-gain}}
Recall that $N(t)=t^{\sigma p}$, $\sigma>1$, $A(t)=t^{\sigma'(p-1)}\log(e+t)^q$, $B(t)=t^{p\sigma'}\log(e+t)^{qp'}$ where $q>\sigma'(p-1)$.
To apply Lemma~\ref{lemma:iteration} we need inequality~\eqref{eqn:D-assumption} to hold.  In fact we can find a Young function $D$ such that
\[ A^{-1}(t) D^{-1}(t) \approx M^{-1}(t) \hbox{ and } B^{-1}(t)D^{-1}(t) \approx t^{\frac{1}{p}},\]
where $ M(t) = \bar{N}(t^{p-1}) \approx t^{(p\sigma)'(p-1)}$.  
To see this, note that since 
\[ A^{-1}(t) \approx \frac{t^{\frac{1}{\sigma'(p-1)}}}{\log(e+t)^{\frac{q}{\sigma'(p-1)}}} 
\quad \hbox{ and } \quad 
B^{-1}(t)\approx \frac{t^\frac{1}{p\sigma'}}{\log(e+t)^\frac{q}{(p-1)\sigma'}},  \]
if we solve either approximate equality, we get
\[ D^{-1}(t) \approx t^{\frac{1}{(p\sigma)'(p-1)}- \frac{1}{\sigma'(p-1)}}\log(e+t)^{\frac{q}{\sigma'(p-1)}}
  = t^{\frac{1}{p\sigma}}\log(e+t)^{\frac{q}{\sigma'(p-1)}}.  \]

Given this, inequality~\eqref{eqn:foundation} holds and in this context becomes
\begin{equation} \label{eqn:iter-power}
 \mu_{k+1}^{\frac{1}{\sigma p}}  
\leq  L\bigg(\frac{(k+2)^{1+{\epsilon}}}{\epsilon \tau_0}\bigg) 
 \frac{\mu_{k}^{\frac{1}{\sigma p}}}{\log(\mu_k^{-1})^{\frac{q}{\sigma'(p-1)}}}.  
 \end{equation}
Now define  $m_k= \log(\mu_k^{-1})$, $k\geq 0$.  We will show   by induction, for all $k\geq 1$, $m_{k} \geq m_0 +k$.  Given this,  we have that  $m_k\rightarrow \infty$ as $k\rightarrow \infty$; this implies that 
\[ v\left(S\left(\tau_0 \left(\|f\|_{L^A(v,\Omega)}+\|g\|_{L^B(v,\Omega)}\right)\right)\right)=0, \]
which completes the proof for degenerate weak subsolutions.

We first show that $m_1 \geq m_0+1$. In this case, inequality~\eqref{eqn:foundation-0term} becomes
\begin{equation*}
\mu_1^{\frac{1}{\sigma p}} \leq  \frac{4L}{\tau_0(1-2^{-\epsilon})} \frac{\mu_0^{\frac{1}{\sigma p}}}{\log(\mu_0^{-1})^{\frac{q}{\sigma'(p-1)}}}.
\end{equation*}
If we raise both sides to the power $\sigma p$, take the reciprocal, and  take the logarithm of both sides, then by the  definition of $D_1$ we have that
\begin{align*}
m_1 \geq \sigma p\log\left(\frac{\tau_0(1-2^{-\epsilon})}{4L} \right) + m_0 
+ \Big(\frac{q\sigma p}{\sigma'(p-1)}\Big)\log(m_0).
\end{align*}
Again by Lemma~\ref{lemma:iteration}, we can choose $\tau_0$ sufficiently large so that $\log(m_0)\geq \log(2)>0$, so  the last term on the right is nonnegative.  Therefore,  by taking  $\tau_0$ sufficiently large (depending on our choice of $\epsilon$ below) we have  that  $m_1 \geq m_0+1$. 

We now prove the induction step.  Suppose for some $k$, $m_k\geq m_0+k$.  Arguing as we did above, \eqref{eqn:foundation} becomes
\[ m_{k+1}
\geq \sigma p\log\Big(\frac{\epsilon \tau_0}{L}\Big) -\sigma p(1+\epsilon)\log(k+2) +m_k 
+\frac{q\sigma p}{\sigma'(p-1)} \log(m_k). \]
Fix $\epsilon>0$ such that 
\[ 1+\epsilon = \frac{q}{\sigma'(p-1)};\]
this is possible by our assumptions that $q>\sigma'(p-1)$.  Then we can combine terms to get
\[ m_{k+1}
\geq \sigma p\log\Big(\frac{\epsilon \tau_0}{L}\Big)  +m_k 
+\frac{q\sigma p}{\sigma'(p-1)} \log\left(\frac{m_k}{k+2}\right). \]
By our induction hypothesis and since $m_0\geq 2$, the last term is positive. Therefore, we can choose $\tau_0$ sufficiently large independently of $k$ so that 
\[ m_{k+1} \geq 1 + m_0+k = m_{0}+(k+1).  \]
To finish the proof, given a degenerate weak supersolution $w$, consider the subsolution $u=-w$ of \eqref{dirichlet-problem} with data $-f|f|^{p-2}+v^{-1}\Div(v|g|^{p-2}g{\bf t})$.  Then by our argument above we have that 
\[\|w^{-}\|_{L^\infty(v,\Omega)}= \| u^+ \|_{L^\infty(v,\Omega)} \leq C\left(\|f\|_{L^A(v,\Omega)}+\|g\|_{L^B(v,\Omega)}\right).\]
Finally, if $u$ is a solution, it is both a subsolution and supersolution, and the norm estimate follows from the fact that $u=u^+ - u^-$.  This completes the proof.

\subsection*{Proof of Theorem~\ref{thm:orlicz-gain}}
The proof is similar in outline to the proof of Theorem~\ref{thm:power-gain}, but many small significant changes are required.  Therefore, despite some possible redundancy, we give  the details of the iteration argument.  

Recall that  $N(t)=t^p\log(e+t)^\sigma$, $\sigma>p$,  $A(t)=\exp(t^\alpha)-1$, $B(t)=\exp(t^\beta)-1$, where 
$\alpha>\frac{p(p-1)}{\sigma}$ and $\beta>\frac{p}{\sigma-p}$. To apply Lemma~\ref{lemma:iteration} we need inequality~\eqref{eqn:D-assumption} to hold.  In fact, we will find $D$ such that
\[ B^{-1}(t)D^{-1}(t)\approx t^\frac{1}{p} 
\quad \hbox{ and } \quad 
A^{-1}(t) D^{-1}(t) \lesssim  M^{-1}(t). \]
We have that 
\[ M(t) = \bar{N}(t^{p-1}) \approx \frac{t^{p'(p-1)}}{\log(e+t)^{\sigma(p'-1)}}
= \frac{t^p}{\log(e+t)^{\frac{\sigma}{p-1}}}, \]
and for $t\geq 1$,
\[  A^{-1}(t)  \approx \log(e+t)^{\frac{1}{\alpha}},
\quad~B^{-1}(t)\approx \log(e+t)^\frac{1}{\beta}, \text{ and } 
 M^{-1}(t) \approx t^{\frac{1}{p}} \log(e+t)^{\frac{\sigma}{p(p-1)}}.  
\]
If we solve the first approximate equality for $D^{-1}$, we get

Hence, 
\[  D^{-1}(t) \approx \frac{t^\frac{1}{p}}{B^{-1}(t)} 
\approx 
t^\frac{1}{p}\log(e+t)^{-\frac{1}{\beta}}.  
 \]
To justify the second approximate inequality, note that
\[ \frac{M^{-1}(t)}{A^{-1}(t)} \approx t^\frac{1}{p}\log(e+t)^{\frac{\sigma}{p(p-1)}-\frac{1}{\alpha}},
\]
and the exponent on the log term on the righthand side is positive by our assumption on $\alpha$.  

Given this, inequality~\eqref{eqn:D-assumption} holds, and in this context \eqref{eqn:next-step} becomes
\[ \mu_{k+1}^{\frac{1}{p}} \log(\mu_{k+1}^{-1})^{\frac{\sigma}{p}}
\leq L\left(\frac{(k+2)^{1+\epsilon}}{\epsilon \tau_0}\right)
\mu_k^{\frac{1}{p}}\log(\mu_k^{-1})^{\frac{1}{\beta}}.
\]
Now define  $m_k= \log(\mu_k^{-1})$, $k\geq 0$.  We will show   by induction, for all $k\geq 1$, $m_{k} \geq m_0 +k$.   Then, arguing exactly as we did in the proof of Theorem~\ref{thm:power-gain}, we get the desired result.

\medskip

We first show that $m_1 \geq m_0+1$.  In this case, inequality~\eqref{eqn:foundation-0term} becomes
\[  \mu_1^{\frac{1}{p}}\log(\mu_1^{-1})^{\frac{\sigma}{p}}
\leq \frac{4L}{\tau_0 (1-2^{-\epsilon})}\mu_0^{\frac{1}{p}}\log(\mu_0^{-1})^{\frac{1}{\beta}}.
\]
If we raise this to the power $-p$ we get
\[ \mu_1^{-1} \log(\mu_1^{-1})^{-\sigma} 
\geq \left(\frac{\tau_0 (1-2^{-\epsilon})}{4L}\right)^p 
\mu_0^{-1} \log(\mu_0^{-1})^{-\frac{p}{\beta}}.  
\]
If we take the log of both sides and rearrange terms, using the fact that $m_0\leq m_1$, we get 
\[ m_1  \geq p\log\left(\frac{\tau_0(1-2^{-\epsilon})}{4L} \right) + m_0 
+ \Big(\sigma-\frac{p}{\beta}\Big)\log(m_0).  \]
By Lemma~\ref{lemma:iteration}, for $\tau_0$ sufficiently large, $\log(m_0)>0$, and so, by our assumption on $\beta$, the last term is nonnegative.   Therefore, for $\tau_0$ sufficiently large (depending on our choice of $\epsilon$ below), we have that $m_1\geq m_0+1$.

We now prove the induction step.  Suppose for some $k$, $m_k\geq m_0+k$.  Arguing as we did above, now using the fact that $m_{k+1}\geq m_k$, \eqref{eqn:foundation} becomes
\[ m_{k+1}
\geq p\log\Big(\frac{\epsilon \tau_0}{L}\Big) -p(1+\epsilon)\log(k+2) +m_k 
+\Big(\sigma-\frac{p}{\beta}\Big) \log(m_k). \]

Fix $\epsilon>0$ such that $\frac{\sigma}{p}-\frac{1}{\beta}=1+\epsilon$; this is possible by our assumption on $\beta.$ With this definition of $\epsilon$, we can combine the two log terms to get (since $m_{k+1}\geq 1$)
\[ m_{k+1} \geq p\log\Big(\frac{\epsilon \tau_0}{L}\Big)
+m_k + p(1+\epsilon)\log\Big(\frac{m_k}{k+2}\Big). \]

By our induction hypothesis and since  $m_0\geq 2$, the last term  is positive. Therefore, we can choose  $\tau_0$ sufficiently large, independent of $k$, so that
\[ m_{k+1} \geq 1 + m_0+k = m_{0}+(k+1).  \]
This completes the proof for subsolutions.  The proof for supersolutions and solutions is the same as in Theorem~\ref{thm:power-gain}.

\subsection*{Proof of Theorem~\ref{thm:orlicz-gain-g=0}}
The proof of this result is essentially the same as the proof of Theorem~\ref{thm:orlicz-gain}.  Since $g=0$,  the Young function $B$ does not play a role and we only need to find a Young function $D$ such that $A^{-1}(t)D^{-1}(t) \approx M^{-1}(t)$.  The argument above shows that we can take
\[ D^{-1}(t) \approx t^\frac{1}{p}\log(e+t)^{\frac{\sigma}{p(p-1)}-\frac{1}{\alpha}}.\]
We can then repeat the above argument using this definition of $D^{-1}$.  In it, we  need to pick our value of $\epsilon$ so that $\frac{\sigma}{p-1}-\frac{1}{\alpha}=1+\epsilon$.  This is possible because of our assumption that $\alpha > \frac{p-1}{\sigma-p+1}$, which in turn is where we use our assumption that $\sigma>p-1$.  

\medskip

\begin{remark}
    In the statement of Theorem~\ref{thm:orlicz-gain}, we chose a condition on $\alpha$ so that $\frac{\sigma}{p(p-1)}-\frac{1}{\alpha}>0$, which was sufficient to prove that $A^{-1}(t)D^{-1}(t)\lesssim M^{-1}(t)$.  However, this would hold if we assumed the weaker condition that 
    \begin{equation} \label{eqn:alpha-beta-link} 
    \frac{\sigma}{p(p-1)}-\frac{1}{\alpha} \geq - \frac{1}{\beta}.  
    \end{equation}
    If we combine this with the assumption that $\beta>\frac{p}{\sigma-p}$, we see that this implies that $\alpha > \frac{p-1}{\sigma-p+1}$, the condition in Theorem~\ref{thm:orlicz-gain-g=0}.  However, \eqref{eqn:alpha-beta-link} is a stronger assumption, and the value of $\alpha$ depends on our choice of $\beta$.  It is an open problem whether the values of $\alpha$ and $\beta$ can be decoupled to prove a sharper version of Theorem~\ref{thm:orlicz-gain}.
\end{remark}

\section{Exponential integrability:  Proof of Theorem~\ref{thm:no-gain} and Corollary~\ref{cor:endpt-powergain}}
\label{section:nogain-proof}

We first prove Theorem~\ref{thm:no-gain}.  We will prove this for subsolutions; the proof for supersolutions and solutions is gotten as in the previous proofs.
Recall that we assume $N(t)=t^p$ in the Sobolev inequality~\eqref{sobolev}.    Let $u$ be a  degenerate weak subsolution of \eqref{dirichlet-problem} with $f,g\in L^\infty(v,\Omega)$.  We now repeat the argument used to establish the common iteration formula~\eqref{eqn:next-step} at the beginning of Section~\ref{section:main-proof}  by testing $u$ against $\varphi_r$.  As in the proofs above, for $r>0$ set $S(r) = \{x\in\Omega~:~u(x)>r\}$ and set $\varphi_r(x) = (u(x)-r)_+$.   Then, since $\supp(\varphi_r)=S(r)$,
\begin{align*}
& \|\nabla \varphi_r\|_{QL^p(S(r))}^p \\
&\qquad \quad \leq \int_{S(r)}|f|^{p-1}\varphi_r\,vdx + \int_{S(r)}|g|^{p-1}|\vect\cdot \grad \varphi_r|\,vdx\\
&\qquad \quad \leq \|\varphi_r\|_{L^p(v,\Omega)}\|f^{p-1}\chi_{S(r)}\|_{L^{p'}(v,\Omega)}+ \|g^{p-1}\chi_{S(r)}\|_{L^{p'}(v,\Omega)}\|\vect \cdot \grad \varphi_r\|_{L^p(v,S(r))}\\
&\qquad \quad \leq \left(S_N\|f\chi_{S(r)}\|_{L^p(v,\Omega)}^{p-1} + C(\vect)\|g\chi_{S(r)}\|_{L^p(v,\Omega)}^{p-1}\right)\|\nabla \varphi_r\|_{QL^p(S(r))}\\
&\qquad \quad \leq C(\vect,p, S_N)\left(\|f\|_{L^\infty(v,\Omega)}+\|g\|_{L^\infty}\right)^{p-1}\|\chi_{S(r)}\|_{L^p(v,\Omega)}^{p-1}\|\nabla \varphi_r\|_{QL^p(S(r))}.
\end{align*}
If we divide by $\|\nabla \varphi_r\|_{QL^p(S(r))}$, take the $p-1$ root, and use the Sobolev inequality without gain, we get
\[ \|\varphi_r\|_{L^p(v,S(r))} \leq C(\vect,p, S_N)\left(\|f\|_{L^\infty(v,\Omega)}+\|g\|_{L^\infty(v,\Omega)}\right)\|\chi_{S(r)}\|_{L^p(v,\Omega)}.\]
Thus, we can argue as before, and for $s>r>0$ we find
\begin{align}\label{2-2-foundation}
(s-r)v(S(s))^{\frac{1}{p}} \leq C(\vect,p, S_N)\left(\|f\|_{L^\infty(v,\Omega)}+\|g\|_{L^\infty(v,\Omega)}\right)v(S(r))^{\frac{1}{p}}.
\end{align}

Suppose first that $0<\|f\|_{L^\infty(v,\Omega)}+\|g\|_{L^\infty(v,\Omega)}\leq 1$.  For $k\in \N$, define 
\[ r_k = C(\vect,p, S_N)ek=\alpha k. \]
If we set $r=r_k$ and $s=r_{k+1}$ and define $\mu_k = v(S(r_k))$, \eqref{2-2-foundation} implies that
\[\mu_{k+1} \leq e^{-p}\mu_k \leq \dots \leq e^{-pk}\mu_1.\]

By the definition of the Orlicz norm,  $u^+\in L^H(v,\Omega)$, where $H(t)=t^r\exp(t)$, $r>0$,  if and only if there exists $\lambda>0$ such that 
\begin{equation}\label{2-2-orliczinclusion} 
\int_\Omega H\left(\frac{u^+}{\lambda}\right)\,vdx < \infty.
\end{equation}
By \cite[Theorem 8.16]{MR924157} and a change of variables, we can rewrite this integral and estimate as follows:
\begin{align*}\int_0^\infty H'(t)v(\{u>\lambda t\})\,dt 
&= \frac{1}{\lambda}\int_0^\infty H'\left( \frac{s}{\lambda}\right)v(\{u>s\})\,ds \\
&\leq \frac{v(\Omega)}{\lambda}\int_0^{r_1}H'\left(\frac{s}{\lambda}\right)\,ds + \frac{1}{\lambda}\int_{r_1}^\infty H'\left(\frac{s}{\lambda}\right) v(\{u>s\})\,ds \\
&\leq v(\Omega)H\left( \frac{\alpha}{\lambda}\right) 
+ \frac{1}{\lambda}\sum_{k=1}^\infty \int_{r_k}^{r_{k+1}} H'\left(\frac{s}{\lambda}\right) v(\{u>r_k\})\,ds \\
&\leq  v(\Omega)H\left( \frac{\alpha}{\lambda}\right) 
+ \frac{\alpha}{\lambda}
\sum_{k=1}^\infty H\left(\frac{r_{k+1}}{\lambda}\right) e^{-p(k-1)}\mu_1 \\
& = v(\Omega)H\left( \frac{\alpha}{\lambda}\right) 
+ \frac{\alpha\mu_1}{\lambda}\sum_{k=1}^\infty \left(\frac{r_{k+1}}{\lambda} \right)^r \exp\left(\frac{\alpha}{\lambda}(k+1) - p(k-1)\right).
\end{align*}
If we choose $\lambda>0$ so that $p-\epsilon=\frac{\alpha}{\lambda}$ for some $\epsilon>0$, then the exponent becomes
\[(p-\epsilon)(k+1) -p(k-1)= (2p-\epsilon) -\epsilon k.  \]
Using this, we can rewrite the final term above as
\begin{equation*}
v(\Omega)H\left( \frac{\alpha}{\lambda}\right) 
+\mu_1 e^{2p-\epsilon} \left(\frac{\alpha}{\lambda} \right)^{r+1}\sum_{k=1}^\infty (k+1)^r e^{-\epsilon k} 
=\Lambda<\infty.
\end{equation*}
Thus, for our chosen value of $\lambda>0$ we get that
 \[ \int_\Omega H\left(\frac{u^+}{\lambda}\right)\,vdx \leq \Lambda.  \]
Note that, taking into account the dependencies of the constants $\alpha$ and $\lambda$, we have that
$\Lambda = \Lambda(H,t,p,S_N,\Omega,v)$. We may assume without loss of generality that $\Lambda>1$, and so by the convexity of $H$ we have that
 \[ \int_\Omega H\left(\frac{u^+}{\lambda\Lambda }\right)\,vdx 
\leq  \frac{1}{\Lambda}\int_\Omega H\left(\frac{u^+}{\lambda}\right)\,vdx \leq 1.\]
Hence, by the definition of the Orlicz norm, 
\[ \|u^+\|_{L^H(v,\Omega)}\leq \lambda \Lambda. \]

If $0<\|f\|_{L^\infty(v,\Omega)}+\|g\|_{L^\infty(v,\Omega)}<\infty$, the homogeneity of our equation ensures that 
\[ w=\frac{u}{2\left(\|f\|_{L^\infty(v,\Omega)}+\|g\|_{L^\infty(v,\Omega)}\right)}\] 
is a degenerate weak subsolution of the Dirichlet problem \eqref{dirichlet-problem} with data
\[ \tilde{f}=\frac{f|f|^{p-2}}{2^{p-1}\left(\|f\|_{L^\infty(v,\Omega)}
+\|g\|_{L^\infty(v,\Omega)}\right)^{p-1}} \quad \text{and} \quad 
\tilde{g} = \frac{g|g|^{p-2}}{2^{p-1}\left(\|f\|_{L^\infty(v,\Omega)}
+\|g\|_{L^\infty(v,\Omega)}\right)^{p-1}}.\]  
Since $\|\tilde{f}\|_{L^\infty(v,\Omega)}+\|\tilde{g}\|_{L^\infty(v,\Omega)}\leq 1$, our previous estimate applied to $w$  gives
\[\|u^+\|_{L^H(v,\Omega)} \leq 2\lambda \Lambda \left(\|f\|_{L^\infty(v,\Omega)}+\|g\|_{L^\infty(v,\Omega)}\right).\] 

Finally, if $\|f\|_{L^\infty(v,\Omega)}+\|g\|_{L^\infty(v,\Omega)}=0$, it suffices to prove that the only nonnegative subsolution to the homogenous equation is the $0$ function.  Let $(u,\grad u) \in QH^{1,p}_0(v,\Omega)$ be a subsolution and test it against $(u^+,\chi_{u>0}\nabla u)$ in the definition, which immediately yields, since $f=g=0$ $v$-a.e., that
\[ \int_{S(0)} |\sqrt{Q}\grad u|^p\,dx \leq  0.  \]
Hence, $\grad u=0$ in $QL^p(S(0))$.  But then, by the Sobolev inequality, we have that 
\[ \|u^+\|_{L^p(v,\Omega)}\leq S_N\|\sqrt{Q} \grad u\|_{L^p(S(0))} = 0, \]
and so $u^+=0$ $v$-a.e. \\


\medskip

We now describe how to modify the proof of Theorem~\ref{thm:no-gain} to prove Corollary~\ref{cor:endpt-powergain}. In this case we start directly with~\eqref{eqn:next-step} and the definition of $D$ used in the proof of Theorem~\ref{thm:power-gain}.  Then, with the notation above, instead of \eqref{2-2-foundation} we get that
\[ \mu_{k+1} \leq e^{-\sigma p} \mu_k \log(\mu_k^{-1})^{-\sigma p} \leq e^{-\sigma p} \mu_k. \]
Given this, we can now argue exactly as we did above, replacing the $L^\infty$ norms of $f$ and $g$ with the $L^A$ and $L^B$ norms, respectively, to prove the desired estimate. 

\section{Applications to specific geometric settings}
\label{section:geometric}

In this section we give some applications of our main results to a specific geometric setting considered by Korobenko, {\em et al.}~\cite{korobenko2024,Korobenko:2016ue}.  We begin with an elementary proposition which generalizes the classical argument showing how the classical Sobolev inequality can be proved from the endpoint case when $p=1$.  

\begin{prop} \label{prop:1-pSobolev}
    Let $\Omega\subset \R^n$ be a bounded domain and fix $1<p<\infty$.  Suppose $Q$ is a matrix such that $|Q|_\op^{\frac{p}{2}} \in L^1(\Omega)$.  Suppose that there exists a Young function $A$ such that the degenerate Sobolev inequality
    \begin{equation} \label{eqn:1-sobolev}
 \|u\|_{L^A(\Omega)} \leq S_A \|\grad u\|_{QL^1(\Omega)} 
 \end{equation}
    holds for all $u \in \lip_0(\Omega)$.  Define $B_p(t)=A(t^p)$.  Then for every $u \in \lip_0(\Omega)$,
    \begin{equation}  \label{eqn:p-sobolev}
 \|u\|_{L^{B_p}(\Omega)} \leq S_{B_p} \|\grad u\|_{QL^p(\Omega)}. 
 \end{equation}
\end{prop}

\begin{proof}
Fix $1<p<\infty$ and $u\in \lip_0(\Omega)$.  If we let $w=\sgn(u)|u|^p$, then $w\in \lip_0(\Omega)$ and $\grad w = \sgn(u)p|u|^{p-1}\grad u$.  Therefore, by rescaling and inequality \eqref{eqn:1-sobolev}, we have that
\begin{multline*}
    \|u\|_{L^{B_p}(\Omega)}^p
     = \||u|^p\|_{L^A(\Omega)} 
      = \|w\|_{L^A(\Omega)} 
      \leq C_A \|\grad w\|_{QL^1(\Omega)}
      \leq  C_A\int_\Omega p |u|^{p-1} |\sqrt{Q}\grad u|\,dx \\
      \leq p C_A\bigg(\int_\Omega |u|^{(p-1)p'}\,dx\bigg)^{\frac{1}{p'}}
     \bigg(\int_\Omega |\sqrt{Q}\grad u|^p\,dx\bigg)^{\frac{1}{p}} 
      =  p C_A\|u\|_{L^p(\Omega)}^{\frac{p}{p'}}\|\grad u\|_{QL^p(\Omega)}. 
      \end{multline*}
Since $t^p \lesssim B_p(t)$, by Lemma~\ref{normcompare} we have that
\[ \|u\|_{L^{B_p}(\Omega)}^p
      \leq C(p,A,\Omega) \|u\|_{L^{B_p}(\Omega)}^{p-1}\|\grad u\|_{QL^p(\Omega)}. \]

If we rearrange terms we get \eqref{eqn:p-sobolev}.
\end{proof}

\begin{remark}
    If we assume that $v=1$ and $Q$ is such that $|Q|_\op \leq k$, then Proposition~\ref{prop:1-pSobolev} holds for all $p$.  The fact that we take $v=1$ seems central to the above proof; it is an open problem to prove a comparable result assuming  $v\in L^1(\Omega)$ and $|Q|_\op \leq kv$ as we did in our main results. 
\end{remark}

\begin{remark} \label{remark:our-sobolev}
    As a special example, if $A(t) \approx t\log(e+t)^\sigma$ for some $\sigma >0$ in \eqref{eqn:1-sobolev}, then Proposition \ref{prop:1-pSobolev} gives \eqref{eqn:p-sobolev} with $B_p(t) \approx t^p\log(e+t)^\sigma$, the main hypothesis of Theorems~\ref{thm:orlicz-gain} and~\ref{thm:orlicz-gain-g=0} when $v=1$ and $|Q|_\op\leq k$.
\end{remark}

\medskip

With Proposition~\ref{prop:1-pSobolev} we can now compare our results to those in~\cite{korobenko2024,Korobenko:2016ue}, which we will restate here using our notation, which is somewhat different than theirs.   In both of these papers they are interested in matrices $Q$ that are bounded and infinitely degenerate; that is, they assume $v=1$ and $|Q|_\op \in L^\infty(\Omega)$.  Their results require that $Q$ have a particular diagonal form.  They also require careful control of the degeneracy of smallest eigenvalue, and so they assume a number of technical hypotheses on it.  To avoid getting bogged down in the details, for each paper we will simply refer to these hypotheses as the "$F$-geometry" (which is their terminology) and we refer the reader to~\cite[Definition~16]{Korobenko:2016ue} and~\cite[Definition~32]{korobenko2024} (as well as the statements of the results cited below) for complete details.   

In~\cite{Korobenko:2016ue}, the authors consider the equation $-\Div(Q\grad u)=f$.  
In~\cite[Proposition~81]{Korobenko:2016ue} they prove that assuming the $F$-geometry, they get an Orlicz Sobolev inequality:  for all $\varphi \in \lip_0(\Omega)$,
\begin{equation} \label{eqn:sawyer-sobolev1} 
\|\varphi\|_{L^N(\Omega)} \leq S_N\|\grad \phi\|_{QL^1(\Omega)}, 
\end{equation}
where $N(t)\approx t\log(e+t)^\sigma$, $\sigma>1$.  With this Sobolev inequality, they prove (see~\cite[Theorem~83]{Korobenko:2016ue}) a somewhat more general version of Theorem~\ref{thm:orlicz-gain-g=0} when $p=2$ (that is, that  solutions are bounded on $\Omega$), assuming that the data $f$ satisfies 
\begin{equation} \label{eqn:dualf}
\sup_{\varphi \in \lip_0(\Omega)} \frac{\displaystyle \bigg|\int_\Omega f(y)\varphi(y)\,dy\bigg|}{\|\grad \varphi\|_{QL^1(\Omega)}} <\infty.  
\end{equation}
If we use the Sobolev inequality~\eqref{eqn:sawyer-sobolev1} to estimate the denominator, and apply  H\"older's inequality~\eqref{holders} with Young function $N$, we see that this condition is satisfied if $f\in L^{\bar{N}}(\Omega)$, where $\bar{N}(t)\approx \exp(t^{\frac{1}{\sigma}})-1$.  

In comparison:  by Remark~\ref{remark:our-sobolev}, \eqref{eqn:sawyer-sobolev1} implies  the Sobolev inequality 
\begin{equation} \label{eqn:our-sobolev} \|\varphi\|_{L^{N_2}(\Omega)}\leq S_{N_2}\|\grad \varphi \|_{QL^2(\Omega)}, 
\end{equation}
where $N_2(t)= t^2\log(e+t)^\sigma$, $\sigma>1$.  Therefore, we can apply Theorem~\ref{thm:orlicz-gain-g=0} to show our solutions are bounded, but with the assumption that our data $f$ satisfies $f\in L^A(\Omega)$, where $A(t)=\exp(t^\alpha)-1$, $\alpha>{\frac{1}{\sigma-1}}$.  Thus, while we assume a somewhat weaker Sobolev inequality, we require stronger exponential integrability on the function $f$.  We also note that with additional assumptions they proved that solutions are not just bounded but continuous (see\cite[Theorem~84]{Korobenko:2016ue}). 

\medskip

In~\cite{korobenko2024}, the authors consider the equation $-\Div(Q\grad u)=f+\Div(\vecg)$ in $\R^2$.  Assuming the $F$-geometry, they prove a local, modular Sobolev inequality (see~\cite[Proposition~37]{korobenko2024}):
\begin{equation} \label{eqn:sawyer-sobolev2}
    N^{-1}\bigg(\avgint_{B(r)} N(|\varphi(y)|)\,dy\bigg) 
    \leq S_N(r) \|\grad \varphi\|_{QL^1(|B(r)|^{-1}dy,B(r))}, 
\end{equation}
where $B(r)$ is a metric ball of radius $r>0$ in the Carnot-Caratheodory metric induced by $Q$, and for $t>0$ sufficiently large,
\begin{equation} \label{eqn:funnyN}
N(t) = \exp\big( [ \log(t)^{\frac{1}{m}}+1]^m\big) 
\end{equation}
with $m>1$.  Using this, they prove that solutions are locally bounded on metric balls, assuming that $f$ satisfies~\eqref{eqn:dualf} with $\Omega=B(r)$, and that $\vecg$ satisfies
\begin{equation} \label{eqn:dualg}
    \sup_{\varphi \in \lip_0(B(r))} \frac{\displaystyle \bigg|\int_{B(r)} \sqrt{Q(y)}\grad \varphi(y) \cdot \vecg(y)\,dy\bigg|}{\|\grad \varphi\|_{QL^1(B(r))}} <\infty. 
\end{equation}

Arguing as before, if we assume that the norm Sobolev inequality corresponding to~\eqref{eqn:sawyer-sobolev2} holds, then~\eqref{eqn:dualf} holds if $f\in L^{\bar{N}(B_r)}$, where $N$ is given by \eqref{eqn:funnyN}. 
Clearly, \eqref{eqn:dualg} holds if $|\vecg|$ is in $L^\infty(B(r))$; it is not obvious whether it holds for any unbounded $\vecg$.  In~\cite[Proposition~17]{korobenko2024}, the authors note these estimates, but while they explicitly assume the modular Sobolev inequality, in their proof they appear to be using the norm Sobolev inequality.  Since $N$ is submultiplicative, the modular Sobolev inequality \eqref{eqn:sawyer-sobolev2} is weaker than the norm Sobolev inequality with this $N$ (see~\cite[Lemma~16]{korobenko2024}).  

It is an interesting open question whether their proof of~\eqref{eqn:sawyer-sobolev2} can be modified to prove a norm Sobolev inequality.  If it can, then our proof of Theorem~\ref{thm:orlicz-gain} could be modified to work with Young functions of the form
$N_2(t)\approx \exp\big( [ \log(t^2)^{\frac{1}{m}}+1]^m\big)$,  which would yield results better than we proved in Theorem~\ref{thm:orlicz-gain}:  since $t^2\log(e+t)^\sigma \lesssim N_2(t)$ for all $\sigma>0$, we would get correspondingly weaker assumptions on the data functions $f$ and $g$.  (This follows from the fact that if $A$ and $B$ are Young functions and $A(t) \lesssim B(t)$, then $\bar{B}(t) \lesssim \bar{A}(t)$.)

On the other hand, by using the Sobolev inequality~\eqref{eqn:our-sobolev} in Theorem~\ref{thm:orlicz-gain}, we get global boundedness results on $\Omega \subset \R^n$ for all $n\geq 2$, and we may assume that the data function $\vecg$ is exponentially integrable and so unbounded.  

\bibliographystyle{plain}
\bibliography{references}
\end{document}